\newtheorem{thm}{Theorem}[section]
\theoremstyle{definition}
\newtheorem{cor}[thm]{Corollary}
\newtheorem{lem}[thm]{Lemma}
\newtheorem{prop}[thm]{Proposition}
\newtheorem{defn}[thm]{Definition}
\newtheorem{case}{Case}
\newtheorem*{remark}{Remark}
\numberwithin{equation}{section}
\newcommand{\N}{\mathbb{N}}
\newcommand{\Z}{\mathbb{Z}}
\newcommand{\Q}{\mathbb{Q}}
\newcommand{\R}{\mathbb{R}}
\newcommand{\tp}{\operatorname{tp}}
\newcommand{\dcl}{\operatorname{dcl}}
\newcommand{\eq}{\operatorname{eq}}
\newcommand{\Cal}{\mathcal}
\newcommand{\st}{\operatorname{st}}
\newcommand{\proves}{\vdash}
\def \<{\langle}
\def \>{\rangle}
\def \hat {\widehat}
\def \((  {(\!(}
\def \)) {)\!)}
\begin{document}

\title[Distal and non-Distal Behavior in Pairs]{Distal and non-Distal Behavior in Pairs}

\author[T. Nell]{Travis Nell}
\address
{Department of Mathematics\\University of Illinois at Urbana-Champaign\\1409 West Green Street\\Urbana, IL 61801}
\email{tnell2@illinois.edu}
\urladdr{http://www.math.uiuc.edu/\textasciitilde tnell2}

\subjclass[2010]{Primary 03C64  Secondary 03C45}

\date{\today}

\begin{abstract}

The aim of this work is an analysis of distal and non-distal behavior in dense pairs of o-minimal structures. A characterization of distal types is given through orthogonality to a generic type in $\Cal M^{\eq}$, non-distality is geometrically analyzed through Keisler measures, and a distal expansion for the case of pairs of ordered vector spaces is computed. 

%The aim of this work is a detailed study of the simplest case of a dense pair of o-minimal structures known to not be distal. A complete characterization of distal types is given in terms of generic types in $M^{eq}$, a distal expansion is computed, and the non-distality is geometrically shown with Keisler measures. 

\end{abstract}

\maketitle

\section{Introduction}\label{Introduction}

Simon, in \cite{Distal}, isolated a subclass of NIP theories, called \emph{distal theories}, encompassing all those theories that can be considered purely unstable. Every o-minimal theory, or even any ordered dp-minimal theory is distal. However, the definability of a linear order does not guarantee distality of an NIP theory. In \cite{DNDP}, we established that various well-known NIP expansions of o-minimal theories by dense and codense sets fail distality. Among these are the theory of dense pairs of o-minimal structures (as studied by van den Dries in \cite{DensePairs}), the theory of the real field expanded by a predicate for dense subgroup of $\R_{>0}$ with the Mann property (see van den Dries and G\"unaydin \cite{MannProperty}), and the theory of an o-minimal structure expanded by a dense $\dcl$-independent subset (see Dolich, Miller and Steinhorn \cite{IndependentPairs}).\newline

\noindent Let $\Cal A=(A,<,\dots)$ be an o-minimal structure expanding an ordered group and let $B\subseteq A$. Denote by $\Cal M$ the pair $(\Cal A,B)$ and assume that it is a model of one of the theories mentioned above. Immediately after our work in \cite{DNDP}, Simon raised the following questions:\newline

\noindent \textbf{Question 1.} Does $\Cal M$ admit a distal expansion? That means, is there an expansion of $\Cal M$ that has a distal theory? \newline
\noindent \textbf{Question 2.} Is there a family of generically stable types in $\Cal M^{\eq}$ such that any invariant type orthogonal to these types is distal?\newline

\noindent While distality is in general not preserved under reducts, there are combinatorial consequences of distality established by Chernikov and Starchenko in \cite{RegLemma} that are preserved under reducts. Therefore the property of having a distal expansion is desirable in and of itself, and explains the relevance of Question 1. Note by Section 6 of \cite{RegLemma}, for $p>0$ the theory of algebraically closed fields of characteristic $p$ is NIP, but does not admit a distal expansion. By \cite{WildCore}, there are even NIP expansions of the real field that do not admit a distal expansion.\newline

\noindent Question 2 is an attempt to better understand the non-distality of these structures. Note that every distal invariant type is orthogonal to every generically stable type \cite{SimonBook}, however there are examples where a non-distal type is orthogonal to every generically stable type. When $B$ is a dense $\dcl$-independent subset of $\Cal A$, the pair $(\Cal A,B)$ has elimination of imaginaries by \cite{IndependentPairs}. Therefore in this example, there are no nontrivial generically stable types even in $\Cal M^{\eq}$. Thus we already know that in this situation Question 2 has a negative answer.\newline

\noindent In the current paper we answer Question 1 positively in the case that $\Cal A$ is an ordered vector space over an arbitrary ordered field $F$, and $B$ is proper dense $F$-subspace of $\Cal A$. In such a situtation, there is a natural group quotient $A/B$, and we will show that this carries the pure structure of an unordered vector space over $F$. This naturally evidences the stable, non-distal behavior of the pair $(\Cal A,B)$ we found in \cite{DNDP}. To reach a distal expansion, we place an ordered $F$-vector space structure upon this quotient. Along the way to the previous result, we also prove that the pair $(\Cal A,A/B;+,0,1,<,B)$ admits a weak elimination of imaginaries. While not necessary for the main thrust of the distal expansion, this does seemingly illustrate the only obstruction to distality is the missing order on the imaginary sort $A/B$. 
\newline

\noindent It is natural to wonder whether our methods can be used to handle all three classes of non-distal NIP theories of expansion of o-minimal structures studied in \cite{DNDP}. We believe that all these theories admit a distal expansion. However, it is unclear how far the idea of ordering the classes of all definable equivalence relations can be pushed. Again, note that when $B$ is a dense $\dcl$-independent subset of $\Cal A$, the pair $(\Cal A,B)$ has elimination of imaginaries by \cite{IndependentPairs}. Therefore, in this situation for every definable equivalence relation there is a definable order on its classes, but the structure is nevertheless not distal.\newline

\noindent We furthermore give a complete answer to Question 2 in the case that $\Cal A$ is an o-minimal expansion of an ordered group, and $B$ is a dense proper elementary substructure. This is done by finding a generically stable type in the sort of the quotient group $A/B$, which completely characterizes the distality. This type serves as a singleton family answering Question 2. We also give a more geometric proof of the non-distality of the original pair, by demonstrating that the Keisler measure induced by the Lebesgue measure is generically stable, but not smooth.  

\subsection*{Acknowledgements} We thank Pierre Simon for asking Questions 1 and 2, along with Erik Walsberg for his thoughts on the approach in Section \ref{GenericExistence}, and Philipp Hieronymi for his guidance throughout this project. The author was partially supported by NSF grant DMS-1654725.

\section{Preliminaries}\label{Preliminaries}

 We begin by setting some notation. For a multisorted structure $\Cal M$ with sorts $(s_i)_{i\in I}$, $i_1,\ldots, i_n \in I$, and $s = (s_{i_1},\ldots, s_{i_n})$, we say $M_s = \{(a_1,\ldots,a_n):a_j \in M_{s_{i_j}}\}$. That is, tuples where the $j$-th element lies in the sort $s_{i_j}$. Note that we distinguish between structures and underlying sets through use of italics. \newline

\noindent Throughout this section fix a first order theory $T$ in a multi-sorted language $\Cal L$. Given an $\Cal L$-formula $\varphi(x,y)$, a model $\Cal M$ of $T$, tuples of sorts $s$ and $t$, $\Cal N \succeq \Cal M$, and $b \in N_t$, we write $\varphi(M,b)$ to denote the collection of all $a \in M_s$ such that $N \models \varphi(a,b)$.  Furthermore for $a \in M_s$, and some definable $R\subseteq M_s \times M_t$, we set $R(a) = \{b \in M_t: (a,b) \in R\}$. We refer to this as the fiber of $R$ above $a$. Similarly for $b \in M_t$ we call $R(b)=\{a \in M_s:(a,b) \in R\}$ the fiber of $R$ above $b$.  For a definable $S \subseteq M$ and a definable function $f:S\to M_t$, the graph of $f$, written $gr(f)$, is $\{(a,b) \in S \times M_t:f(a)=b\}$. If $p(x)$ is a type over some $A \subseteq M$ with $x$ varying over some tuple of sorts, we write $p \in S_x(A)$.\newline

\noindent For ease of notation, we will often work over a large, highly saturated model $\Cal U$. Whenever mentioned, all other sets and models are to be assumed small in cardinality relative to the saturation of $\Cal U$. Notice also that all such definitions can be localized to mention only small models, at the cost of quantifier complexity of the definitions. \newline

\noindent Unless otherwise specified, a set is definable if it is definable with parameters.

\begin{defn}

Let $\Cal M \models T$, $s=(s_1,\ldots,s_n)$ and $t=(t_1,\ldots,t_m)$ be tuples of sorts, and $A \subseteq M_s$ be definable. Then for any definable or type-definable $S \subseteq M_t$, we say $S$ is $A$-\emph{small} if there is $\ell \in \N$ and a definable $f:M_s^\ell \to M_t$ such that $S\subseteq f(A^\ell)$. If $S$ is not $A$-small, then it is $A$-\emph{large}.

\end{defn}

\noindent We now remind the reader of certain definitions involving types that appear often in the context of NIP theories.

\begin{defn}

Let $t=(t_1,\ldots,t_n)$ be a tuple of sorts, and $p(x)\in S_t(U_t)$. We say $p(x)$ is \emph{generically stable} if there is a small $A \subset U$ such that:

\begin{enumerate}

\item ($p$ is definable over $A$.) For each $\varphi(x,y)$, the set $\{b \in U_y:\varphi(x,b) \in p(x)\}$ is $A$-definable.  

\item ($p$ is finitely satisfiable in $A$.) For each $\varphi(x,b) \in p(x)$, there is $a \in A$ such that $\Cal U \models \varphi(a,b)$. 

\end{enumerate}

\end{defn}

\begin{remark}

Notice that definability of a type $p(x) \in S_x(M)$ gives a canonical extension $p(x)|N$ to any $\Cal N \succeq \Cal M$. Suppose that $p(x)$ is defined over $A$. Then for any $b \in N$, $\varphi(x,b) \in p(x)|N$ if and only if there is $b' \in M$ with $\tp(b|A)=\tp(b'|A)$ such that $\varphi(x,b')\in p(x)$. 

\end{remark}

\begin{defn}

Let $t=(t_1,\ldots,t_n)$ be a tuple of sorts, and $p(x)\in S_t(U_t)$. We say $p(x) \in S_t(U_t)$ is \emph{invariant} if there is some $A \subset U_t$ that is small (in cardinality) such that for each $a,a' \in U_t$ with $\tp(a|A)=\tp(a'|A)$, $\varphi(x,a)\in p(x)$ if and only if $\varphi(x,a') \in p(x)$. Such a type is then called \emph{$A$-invariant}. 

\end{defn}

\begin{remark}
The previous definition also works in the case where $\Cal U$ is replaced with an arbitrary model of $T$. Furthermore, we get an analogy of canonical extensions of definable types in this case. Suppose $A \subset M$, and $\Cal M$ is $|A|^+$-saturated. Then consider $\Cal N \succeq \Cal M$. For any $a \in N$ we may define $p(x)|N$ by $\varphi(x,a) \in p(x)|N$ if and only if there is $a' \in M$ with $\tp(a|A)=\tp(a'|A)$ and $\varphi(x,a') \in p(x)$. 
\end{remark}

\noindent A key concept in this work is that of distality. We now provide a definition of a distal partial type. 

\begin{defn}

Let $A \subset U$, $x$ be a variable ranging over a finite tuple of sorts, and $\zeta(x)$ a partial $x$-type over $A$. Let $I_1$ and $I_2$ be infinite linear orders without endpoints, and $(c)$ be a one-element linear order. We say that $\zeta(x)$ is \emph{distal} if for each indiscernible sequence $(a_i)_{i\in I_1+(c)+I_2}$  in $\zeta(x)$ and each $b \in U$, $$(a_i)_{i \in I_1+I_2} \text{ is } b-\text{indiscernible} \Leftrightarrow (a_i)_{i \in I_1+(c)+I_2} \text{ is } b-\text{indiscernible}$$ We say a theory $T$ is \emph{distal} if for each variable $x$ in a finite collection of sorts, the partial type $\{x=x\}$ is distal.

\end{defn}

\begin{remark}
In the previous definition, one may replace $I_1$ and $I_2$ with any concrete example of an infinite linear order without endpoints (for example, $\Q$ or $\Z$). Furthermore, by Theorem 2.28 of \cite{Distal}, one can repace ``for each variable $x$ in a finite collection of sorts'' with ``for each sort $s$ and $x$ varying over $U_s$''.
\end{remark}

\noindent We now give a definition of what it means for two types to be weakly orthogonal. 

\begin{defn}

Let $x$ and $y$ be variables ranging over a finite tuple of sorts, $A \subset U$, $p(x)\in S_x(A)$ and $q(y)\in S_y(A)$. Then we say that $p(x)$ and $q(y)$ are \emph{weakly orthogonal} if $p(x) \cup q(y)$ determines a unique extension to a complete type over $A$ in variables $x$ and $y$. 

\end{defn}

\noindent In the discussion of generically stable types and distal types, we will often work with induced structure on a subset. We now fix what we mean by this notion.

\begin{defn}

Let $t$ be a finite tuple of sorts, $\Cal M\models T$, and $S \subseteq M_t$ be definable.  Then by the \emph{induced structure from $\Cal M$ on $S$} we mean the $\Cal L'$-structure whose underlying set is $S$, where $\Cal L'$ has a relation symbol for each subset of  $S^n$ definable in $\Cal M$ (with parameters from $M$), interpreted in the natural way. 

\end{defn}

\section{Generically Stable Types}\label{GenericExistence}

In this section we work in a more general setting than needed for our main results, that of structures obeying the tameness conditions of \cite{PairStructure}. In less generality most structural lemmas were known previously to this paper, but we prefer to use this reference. All examples in this paper meet the technical assumptions of \cite{PairStructure}\newline

\noindent Let $\tilde{\Cal M}=(\Cal M, P)$ be an expansion of an o-minimal $\Cal L$-structure $\Cal M $ by a dense subset $P$ satisfying the tameness conditions in \cite{PairStructure}. By $M$, we refer to the underlying set of $\Cal M$, which we shall refer to as the home sort. Let $T_P$ be the $\Cal L_P$-theory of $\Cal M$. By small or large, we mean that a definable set is $P$-small or $P$-large.  \newline

\noindent We now introduce the results from \cite{PairStructure} that we need for Sections \ref{GenericExistence} through Section \ref{Measures}. In certain cases, the full strength of the cited results is unnecessary, so we have restated to match the usage in this section. The following is a consequence of Lemma 3.3 from \cite{PairStructure}.

\begin{lem}\label{PairDecomp}

Let $(X_t)_{t\in M^\ell}$ be an $A$-definable family of subsets of $M$. Then there is $m \in \N$, such that for $i =\{1,\ldots m\}$ there are

\begin{itemize}

\item an $A$-definable family $(V_{i,t})_{t \in M^\ell}$ of small subsets of $M$
\item an $A$-definable function $a_i:M^\ell \to M \cup \{\infty\}$

\end{itemize}

such that for $t\in M^\ell$, 

\begin{enumerate}
\item $-\infty = a_0(t)\le \ldots \le a_m(t)=\infty$ is a decomposition of $M$, and
\item one of the following holds:

\begin{itemize}
\item $[a_{i-1}(t),a_i(t)]\cap X_t= [a_{i-1}(t),a_{i}(t)]\cap V_{i,t}$ or 
\item $[a_{i-1}(t),a_i(t)]\cap X_t =[a_{i-1}(t),a_i(t)]\setminus V_{i,t}$. 
\end{itemize}
\end{enumerate}
\end{lem}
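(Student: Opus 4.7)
The plan is to derive the statement from Lemma 3.3 of \cite{PairStructure} together with a uniform o-minimal cell decomposition in the language $\Cal L$, essentially as a reformatting of that lemma's conclusion. I will sketch the underlying structural mechanism for completeness.

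First, I would invoke the key consequence of the tameness conditions: every $\Cal L_P$-definable subset of $M$ agrees, outside a small set, with an $\Cal L$-definable subset of $M$. In the uniform family form supplied by Lemma 3.3 of \cite{PairStructure}, there are an $A$-definable $\Cal L$-family $(Y_t)_{t\in M^\ell}$ of subsets of $M$ and an $A$-definable family $(V_t)_{t\in M^\ell}$ of small subsets of $M$ such that $X_t \triangle Y_t \subseteq V_t$ for every $t \in M^\ell$.

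Next, I would apply uniform o-minimal cell decomposition in $\Cal L$ to the family $(Y_t)_{t\in M^\ell}$. Because each $Y_t$ is a subset of the line, this produces an integer $m$ and $A$-definable functions $-\infty = a_0(t)\le a_1(t)\le \ldots \le a_m(t) = \infty$ such that on each open interval $(a_{i-1}(t),a_i(t))$ the set $Y_t$ is either empty or the full interval, uniformly in $t$. Setting $V_{i,t}=(V_t\cap[a_{i-1}(t),a_i(t)])\cup\{a_{i-1}(t),a_i(t)\}$ produces an $A$-definable family of small sets, since the intersection of a small set with a definable set remains small (restrict the witnessing function to the preimage of $[a_{i-1}(t),a_i(t)]$) and adjoining two points preserves smallness. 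Depending on whether $Y_t$ is empty or full on $(a_{i-1}(t),a_i(t))$, the inclusion $X_t \triangle Y_t \subseteq V_t$ forces one of the two desired alternatives: either $[a_{i-1}(t),a_i(t)]\cap X_t = [a_{i-1}(t),a_i(t)]\cap V_{i,t}$, or $[a_{i-1}(t),a_i(t)]\cap X_t = [a_{i-1}(t),a_i(t)]\setminus V_{i,t}$.

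The main obstacle is the first step: obtaining an $\Cal L$-approximation $(Y_t)$ uniformly definably in the parameter $t$ with a uniformly small error family $(V_t)$. This is precisely the substantive content of Lemma 3.3 of \cite{PairStructure} and is where the tameness assumptions are used in an essential way. Once this uniform approximation is in hand, what remains is standard: uniform o-minimal cell decomposition of a one-variable $\Cal L$-family together with the closure of smallness under intersection with a definable set.
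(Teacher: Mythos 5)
The paper offers no written proof here: Lemma \ref{PairDecomp} is stated as a direct consequence of Lemma 3.3 of \cite{PairStructure}, restated to match later usage, so your route through that lemma plus uniform o-minimal cell decomposition is essentially the paper's approach, just with the bridging argument spelled out. One concrete slip in that bridge: with your choice $V_{i,t}=(V_t\cap[a_{i-1}(t),a_i(t)])\cup\{a_{i-1}(t),a_i(t)\}$, the hypothesis $X_t\,\triangle\,Y_t\subseteq V_t$ only gives the inclusions $[a_{i-1}(t),a_i(t)]\cap X_t\subseteq V_{i,t}$ (when $Y_t$ misses the open interval) or $[a_{i-1}(t),a_i(t)]\setminus V_{i,t}\subseteq X_t$ (when $Y_t$ fills it), not the stated equalities, because $V_t$ may meet the interval at points lying outside the symmetric difference. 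The repair is immediate: define $V_{i,t}$ case-wise as $X_t\cap[a_{i-1}(t),a_i(t)]$ in the first case and as $[a_{i-1}(t),a_i(t)]\setminus X_t$ in the second; the case distinction is $A$-definable since $(Y_t)$ and the $a_i$ are, and each such set is small because it is a definable subset of the small set $V_t$ together with the (finite) endpoints, and smallness passes to definable subsets via the same witnessing function. Also adjoin only finite endpoints, since $\pm\infty\notin M$. With these adjustments your argument is correct; whether it is literally the paper's derivation depends only on the precise formulation of Lemma 3.3 in \cite{PairStructure}, which the paper does not reproduce.
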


\begin{remark}
The authors in \cite{PairStructure} follow by using this lemma to show in Remark 3.4 that $\{t \in M^\ell:X_t \text{ is small}\}$ is $\emptyset$-definable. 
\end{remark}

\noindent The following property of small sets will also be needed. This is a consequence of Lemma 4.29 of \cite{PairStructure}.

\begin{lem}\label{SmallProd}

Let $X \subset M \times M$ be definable. Suppose $\pi(X)$, the projection onto the first coordinate, is small, and that for each $t \in \pi(X)$, that $\{y:(t,y) \in X\}$ is small. Then $X$ is small.  

\end{lem}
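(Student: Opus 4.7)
The approach is to apply the fiber decomposition of Lemma \ref{PairDecomp} to the family $(X_t)_{t\in M}$ and combine it with the given smallness of $\pi(X)$ to package a single definable function witnessing that $X$ itself is small.

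Since $\pi(X)$ is small, first fix $n \in \N$ and a definable $g:M^n \to M$ with $\pi(X) \subseteq g(P^n)$. Applying Lemma \ref{PairDecomp} to the definable family $(X_t)_{t \in M}$ yields $m \in \N$, a definable family $(V_{i,t})_{t\in M,\,1\le i\le m}$ of small subsets of $M$, and definable $a_0,\ldots,a_m:M \to M \cup \{\pm \infty\}$ decomposing each $X_t$ as in that lemma. The key observation is that whenever $X_t$ is small the ``complement'' alternative in the decomposition cannot occur on any nondegenerate interval: otherwise $[a_{i-1}(t),a_i(t)] = \bigl(X_t \cap [a_{i-1}(t),a_i(t)]\bigr) \cup V_{i,t}$ would express a nondegenerate interval as a union of two small sets, contradicting that in the setting of \cite{PairStructure} nondegenerate intervals are $P$-large. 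Consequently, for each $t \in \pi(X)$,
$$X_t \subseteq \bigcup_{i=1}^m V_{i,t} \,\cup\, \{a_0(t),\ldots,a_m(t)\}.$$

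It remains to assemble a single definable map $F:M^N \to M \times M$ with $X \subseteq F(P^N)$. Because the $(V_{i,t})$ come from one definable family in $t$, their smallness is witnessed uniformly: there exist $\ell_i \in \N$ and definable $h_i:M \times M^{\ell_i} \to M$ with $V_{i,t} \subseteq h_i(t, P^{\ell_i})$ for every $t$. Using finitely many distinct elements of $P$ as indices into which of the $h_i$'s or boundary points $a_j$ to invoke, these data glue into one definable map $(t, \bar q, u) \mapsto (t, H(t,\bar q,u))$; precomposing in the first coordinate with $g$ then gives the required $F$, since for $\bar p \in P^n$ with $t = g(\bar p) \in \pi(X)$ every element of $X_t$ is hit. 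The step I expect to require the most care is the uniformity of the smallness witnesses $h_i$ for $(V_{i,t})$: this either falls directly out of how the $V_{i,t}$ are constructed in the proof of Lemma 4.29 of \cite{PairStructure}, or it can be extracted by a compactness argument using the $\emptyset$-definability of smallness noted in the remark following Lemma \ref{PairDecomp}.
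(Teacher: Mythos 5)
First, a point of comparison: the paper does not actually prove this lemma — it is quoted as a direct consequence of Lemma 4.29 of \cite{PairStructure} — so your self-contained derivation is necessarily a different route. The skeleton of that route is sound: apply Lemma \ref{PairDecomp} to the fiber family $(X_t)_{t\in M}$; note that on a nondegenerate subinterval the ``co-small'' alternative cannot occur when $X_t$ is small, since the interval would then be the union of the two small sets $X_t\cap[a_{i-1}(t),a_i(t)]$ and $V_{i,t}$, while a nondegenerate interval is large (the remark quoted from \cite{PairStructure} later in the section, applied with $S=\emptyset$); hence $X_t\subseteq \bigcup_i V_{i,t}\cup\{a_0(t),\ldots,a_m(t)\}$ for $t\in\pi(X)$, and composing with a witness $g$ for the smallness of $\pi(X)$ (together with ``finite unions of small sets are small'') reduces everything to covering $\bigcup_{t\in\pi(X)}\{t\}\times V_{i,t}$ uniformly.

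The step you flag — uniform witnesses $h_i(t,\cdot)$ for the family $(V_{i,t})$ — is, however, exactly where the real content of the lemma sits, and your proposed justification does not close it. The compactness argument using only the $\emptyset$-definability of $\{t: V_{i,t}\ \text{small}\}$ yields finitely many parameterized families of functions such that for each $t$ \emph{some} member, with \emph{some} auxiliary parameter tuple $\bar c$ depending on $t$, covers $V_{i,t}$; to merge these into a single definable $h_i(t,\bar q)$ you would need to select $\bar c$ definably from $t$, and dense pairs do not have definable choice. What is actually needed is the stronger fact that a small set definable over a parameter set $B$ is contained in $f(P^\ell)$ for some $f$ definable over $B$ (applied with $B=At$), equivalently a family version of smallness with uniform witnesses; this is a genuine result of \cite{PairStructure}, in the vein of their Lemma 3.11 and the definability-of-smallness machinery, and not a formal consequence of the definability of the small locus. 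Your fallback — reading uniformity off the construction of the $V_{i,t}$ (which occurs in the proof of Lemma 3.3 of \cite{PairStructure}, not 4.29) — may well succeed, but then you are relying on the internals of the reference at least as much as the paper's one-line citation of Lemma 4.29, which is precisely the statement being proved. So either import the ``witnesses over the defining parameters'' fact explicitly as an additional input, or fall back on the citation as the paper does.
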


\noindent One may be concerned with the difference in definition of small between this work and \cite{PairStructure}. However, Lemma 3.11 of \cite{PairStructure} shows the equivalence between these two definitions.\newline

\noindent Throughout this section, we fix $-\infty \le a < b\le \infty$ in $M\cup\{\pm\infty\}$. Let $E$ be a $\emptyset$-definable equivalence relation on $(a,b)$ with small, dense classes. When it is important to specify in which model an interval is defined, we shall write $(a,b)_M$ to mean $\{x \in M: a<x<b\}$, while for the predicate $P$, we shall write $P(M)$ for $\{x \in M: \Cal M \models P(x)\}$. Notice that then there is an imaginary sort $(a,b)/E$. In this section, we establish the existence of a generically stable type in this sort. Notationally, let $\pi: (a,b) \to (a,b)/E$ be the natural quotient map, and $[x]_E$ be the $E$-class of $x$. We say a set $S \subseteq (a,b)$ is \emph{$E$-invariant} if it is a union of $E$-classes. Recall that by $\Cal U$, we mean a highly saturated model of $T_P$. \newline

\noindent The reader may consider the following example in this section and its successor. Consider $\Cal M = (\R;+,\times)$, the real field, and $P=\Q^{ra}$, the real algebraic numbers. In this case we may consider the $\Cal L_P$-definable equivalence relation $a E b$ if $a-b \in \Q^{ra}$. Notice that each $E$-equivalence class is dense and $\Q^{ra}$-small. This section then gives a generically stable type in the quotient sort of $\R/E$. 

\begin{thm}

Let $S \subseteq (a,b)$ be definable. If $S$ is large and $E$-invariant, then $(a,b) \setminus S$ is small.  

\end{thm}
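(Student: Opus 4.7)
The plan is to apply Lemma \ref{PairDecomp} to $S$ in order to locate an open subinterval $I \subseteq (a,b)$ on which $S$ is precisely the complement of a small set, and then use Lemma \ref{SmallProd} together with density of every $E$-class to transfer this local description into global smallness of $(a,b)\setminus S$.

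First, apply Lemma \ref{PairDecomp} to the trivial (parameterless) family $\{S\}$ to obtain a decomposition $-\infty = a_0 \le \dots \le a_m = \infty$ of $M$ together with small sets $V_1,\dots,V_m$ such that on each $[a_{i-1},a_i]$ the set $S$ either agrees with $V_i$ or with its complement. If the first alternative occurred on every index $i$ with $(a_{i-1},a_i)\cap (a,b)\ne\emptyset$, then $S$ would be contained in $V_1\cup\dots\cup V_m$, a finite union of small sets and hence small, contradicting largeness of $S$. Fix an index $j$ with $a_{j-1}<a_j$, $I := (a_{j-1},a_j)\cap(a,b)\ne\emptyset$, and $S\cap (a_{j-1},a_j) = (a_{j-1},a_j)\setminus V_j$, so that $I\setminus S = I\cap V_j$.

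Now consider the definable set
\[
X = \bigl\{(y,c)\in I\times\bigl((a,b)\setminus S\bigr) : y\,E\,c\bigr\}.
\]
If $(y,c)\in X$, then $E$-invariance of $S$ forces $y\notin S$, so $y\in I\cap V_j\subseteq V_j$; thus the projection to the first coordinate satisfies $\pi_1(X)\subseteq V_j$ and is small. For each $y\in \pi_1(X)$ the fiber $X_y \subseteq [y]_E$ is small by hypothesis on the equivalence relation. Lemma \ref{SmallProd} then yields that $X$ itself is small, and since a definable projection of a small set is small, $\pi_2(X)$ is small. Finally, density of $E$-classes in $(a,b)$ ensures $[c]_E\cap I\ne\emptyset$ for every $c\in (a,b)\setminus S$, so $(a,b)\setminus S = \pi_2(X)$ is small, as required. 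The only bookkeeping point that requires care is ensuring the decomposition yields a ``complement-of-small'' interval genuinely meeting $(a,b)$ rather than one lying entirely outside it; everything else is a direct application of the cited structural lemmas together with density of the $E$-classes.
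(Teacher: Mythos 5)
Your proof is correct and follows essentially the same route as the paper's: apply Lemma \ref{PairDecomp} to find a piece of the decomposition on which $S$ is co-small, then combine Lemma \ref{SmallProd} (projection and fibers small) with density of the $E$-classes and $E$-invariance of $S$ to trap $(a,b)\setminus S$ inside a small set. The only cosmetic difference is that the paper works with the $E$-saturation of $V_i$ and shows $(a,b)\setminus S$ is contained in it, whereas you project the corresponding binary relation onto its second coordinate; these are the same argument in different packaging, and your extra bookkeeping about the chosen interval genuinely meeting $(a,b)$ is sound.
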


\begin{proof}

We begin by applying Lemma \ref{PairDecomp} to the singleton family $(S)$. This yields $m \in \N$, a decomposition $a \le a_0\le \ldots \le a_m\le b$ of $(a,b)_M$, and small sets $V_i$ for $i \in \{1,\ldots m\}$ with the property that for each such $i$, $S \cap [a_{i-1},a_i] = V_i$ or $S \cap [a_{i-1},a_i] = [a_{i-1},a_i] \setminus V_i$. 

As $S$ is large, it is not a finite union of small sets. Therefore there must be some $i$ such that $S \cap [a_{i-1},a_i] = [a_{i-1},a_i] \setminus V_i$. Consider $A = \bigcup_{x \in V_i}[x]_E$. Now $\{(x,y):x \in V_i, y \in [x]_E\}$ is small by Lemma \ref{SmallProd}, and definably surjects onto $A$. Therefore $A$ is small. 

We now show that $((a,b) \setminus S) \subseteq A$. Suppose there is $x \in (a,b)\setminus S$ that is not in $A$. Then $[x]_E \cap V_i = \emptyset$. As $[x]_E$ is dense in $(a,b)$, there is $y \in [x]_E \cap [a_{i-1},a_i]$. Then $y \not \in V_i$, so $y \in S \cap [a_{i-1},a_i]$. As $S$ is $E$-invariant, $x \in S$, contradicting the choice of $x$. Therefore $((a,b)\setminus S) \subseteq A$. As $A$ is small, $(a,b)\setminus S$ is small.

\end{proof}

\begin{remark}
By the remark following Definition 2.1 in \cite{PairStructure}, if $S\subset (a,b)$ is small, then $(a,b) \setminus S$ is large. As also the union of finitely many small sets is small, the large $E$-invariant sets form an ultrafilter on the boolean algebra of $E$-invariant definable subsets of $(a,b)$. Thus, this is a type in the quotient sort $(a,b)/E$. Denote this type $q(y)$. Notice that for an $\Cal L_P$-formula $\varphi(y)$, we have $\varphi(y) \in q(y)$ if and only if $\pi^{-1}(\varphi(M))$ is large. 
\end{remark}

\noindent Let $S \subseteq M^{m+n}$ be $\emptyset$-definable. Recall that $\{t \in M^n: S(t) \text{ is \emph{small}}\}$ is $\emptyset$-definable. Therefore we have the following:

\begin{lem}
The type $q(y)$ is $\emptyset$-definable.
\end{lem}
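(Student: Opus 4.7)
The plan is to show that for each $\Cal L_P$-formula $\varphi(y,\bar z)$, with $y$ a variable of sort $(a,b)/E$ and $\bar z$ a tuple of variables, the set $\{\bar t : \varphi(y,\bar t)\in q(y)\}$ is $\emptyset$-definable. I would do this in three short steps, all of which are routine applications of the material already stated.

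First, I would pull $\varphi$ back along the quotient map. Define
\[
\tilde\varphi(x,\bar z) \;:=\; (a<x<b)\wedge\varphi(\pi(x),\bar z).
\]
This is an $\Cal L_P$-formula in the home sort (together with whatever sorts $\bar z$ ranges over), and its fibre
$\tilde\varphi(M,\bar t)=\pi^{-1}(\varphi(M,\bar t))\subseteq (a,b)$
is $E$-invariant and $\emptyset$-definable from $\bar t$. By the definition of $q$ recorded in the remark before this lemma, $\varphi(y,\bar t)\in q(y)$ if and only if $\tilde\varphi(M,\bar t)$ is large.

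Next, I would convert this largeness statement into a smallness statement using the previous theorem. Since $\tilde\varphi(M,\bar t)$ is $E$-invariant, the theorem gives that it is large iff $(a,b)\setminus\tilde\varphi(M,\bar t)$ is small; the reverse direction is the remark following Definition 2.1 of \cite{PairStructure} (cited in the remark before this lemma), together with the fact that a finite union of small sets is small, which rules out $\tilde\varphi(M,\bar t)$ being small when its complement in $(a,b)$ already is.

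Finally, $\big((a,b)\setminus\tilde\varphi(M,\bar z)\big)_{\bar z}$ is a $\emptyset$-definable family of subsets of $M$, so by the restatement of Remark~3.4 of \cite{PairStructure} recorded just before this lemma, the set of parameters $\bar t$ for which this fibre is small is $\emptyset$-definable. Combining the three steps yields a $\emptyset$-definition of $\{\bar t:\varphi(y,\bar t)\in q(y)\}$, proving that $q$ is $\emptyset$-definable.

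There is essentially no obstacle here: the proof is a direct assembly of the preceding theorem and the $\emptyset$-definability of smallness. The only point that requires any attention is verifying that largeness and small-complement are genuinely equivalent in the $E$-invariant case, which is immediate from the theorem together with the existing remarks.
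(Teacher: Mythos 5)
Your proof is correct and follows essentially the same route as the paper, which derives the lemma directly from the $\emptyset$-definability of the condition ``$S(t)$ is small'' for a $\emptyset$-definable family (Remark~3.4 of \cite{PairStructure}) applied to the pullback $\pi^{-1}(\varphi(M,\bar t))$. Your middle step routing through the Theorem (large $E$-invariant $\Rightarrow$ small complement) is harmless but unnecessary: since ``large'' is by definition ``not small,'' one can simply negate the $\emptyset$-definable smallness condition on the pulled-back family itself.
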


\noindent To further show that $q(y)$ is generically stable, we now need to show finite satisfiability.

\begin{thm} \label{TypeFinSat}
The type $q(y)$ is finitely satisfiable. 
\end{thm}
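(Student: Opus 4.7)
My plan is to establish finite satisfiability of $q(y)$ in the set $A := \pi((a,b)_{M_0}) \subseteq \Cal U^{eq}$ for some small elementary substructure $\Cal M_0 \preceq \Cal U$ in the language $\Cal L_P$ (containing the parameters defining $E$). Given a formula $\varphi(y, b) \in q(y)$ with $b \in \Cal U$, by the definition of $q$ and the previous theorem, $T := \pi^{-1}(\varphi(\Cal U, b)) \subseteq (a,b)_\Cal U$ is large and $E$-invariant with small complement $S_b$. Finite satisfiability in $A$ then reduces to showing $(a, b)_{M_0} \cap T \ne \emptyset$, equivalently $(a, b)_{M_0} \not\subseteq S_b$.

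To set this up uniformly, I plan to apply Lemma \ref{PairDecomp} to the $\emptyset$-definable family $(S_z)_z$ where $S_z := (a,b) \setminus \pi^{-1}(\varphi(y, z))$. This yields $m \in \N$, $\emptyset$-definable functions $a_i(z)$, and $\emptyset$-definable families $V_{i, z}$ of small sets such that each $S_z \cap [a_{i-1}(z), a_i(z)]$ is either $V_{i, z}$ or $[a_{i-1}(z), a_i(z)] \setminus V_{i, z}$. Specializing at $z = b$, since $S_b$ is small only the first case can occur for every $i$; hence $T$ contains $[a_{i-1}(b), a_i(b)] \setminus V_{i, b}$ for every $i$, and at least one such interval contributes a co-small large piece.

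The remaining step is to locate $c \in (a, b)_{M_0}$ inside one such co-small region. I plan to handle this by combining (i) the $\Cal L_P$-elementarity of $\Cal M_0 \preceq \Cal U$, giving order-density of $M_0$ in $\Cal U$ together with the fact that $(a,b)_{M_0}$ is $P(M_0)$-large in $\Cal M_0$ (transferred by elementarity from the $P$-largeness of $(a,b)_\Cal U$), and (ii) the $\emptyset$-definability of the decomposition data $a_i, V_{i,z}$ and of the smallness predicate itself (the Remark following Lemma \ref{PairDecomp}), which allows the structural features of the decomposition to be transferred between $\Cal M_0$ and $\Cal U$. Together, these should prevent $(a,b)_{M_0}$ from being swallowed by any small $\Cal U$-definable $E$-invariant set, thereby producing the required realizer.

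The main obstacle will be parameter management: although the functions $a_i$ and the families $V_{i, z}$ are $\emptyset$-definable, the specific exceptional set $V_{i, b}$ and the interval $[a_{i-1}(b), a_i(b)]$ are defined using $b \in \Cal U \setminus M_0$, so elementarity cannot directly produce an $M_0$-witness for the particular formula $\varphi(y, b)$. I plan to bridge this gap by showing that if $(a,b)_{M_0}$ were contained in some $f(y, P(\Cal U)^\ell)$ with $f$ $\Cal U$-definable, then after applying $\Cal L_P$-elementarity to the $\emptyset$-definable formula asserting smallness one would force $(a,b)_{M_0}$ to be $P(M_0)$-small in $\Cal M_0$, contradicting the large/small dichotomy for the full interval. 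This parameter-tracking step is the delicate heart of the argument.
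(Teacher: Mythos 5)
Your reduction is set up correctly (finite satisfiability amounts to showing a large $E$-invariant $\Cal U$-definable set meets the small model, equivalently that $(a,b)_{M_0}$ is not swallowed by its small complement), but the step you yourself flag as ``the delicate heart'' is a genuine gap, and the mechanism you propose for it does not work. The set $(a,b)_{M_0}$ is not a definable subset of $\Cal U$, and the covering data (the function $f$ and the exceptional sets $V_{i,b}$) involve the parameter $b \in \Cal U \setminus M_0$; consequently there is no $\Cal L_P$-formula over $M_0$ whose truth you can transfer by $\Cal M_0 \preceq \Cal U$ to conclude that $(a,b)_{M_0}$ is $P(M_0)$-small in $\Cal M_0$. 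The $\emptyset$-definability of the smallness predicate lets you transfer statements about definable members of a definable family evaluated at parameters \emph{in} $M_0$; it says nothing about the trace of an interval on $M_0$ being covered by a small set defined over external parameters. So ``after applying elementarity one would force $(a,b)_{M_0}$ to be $P(M_0)$-small'' is not a valid inference, and without it your argument only produces witnesses for formulas $\varphi(y,b')$ with $b' \in M_0$, not for the given $\varphi(y,b)$. The preliminary application of Lemma \ref{PairDecomp} is also unnecessary: the previous theorem already gives that the complement of a large $E$-invariant set is small, which is all you use.

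The missing idea is a dimension count, which is how the paper closes the gap. If $(a,b)_M \subseteq (a,b)_U \setminus S$ and the latter is small, then $(a,b)_M \subseteq f(P(U)^m)$ for some $c$-definable $\Cal L$-function $f$ (here $c$ are the parameters of $S$), hence $(a,b)_M \subseteq \dcl_{\Cal L}(P(U)c)$, so the $\dcl_{\Cal L}$-dimension of $(a,b)_M$ over $P(U)$ is at most $k = \dim(c/P(U))$, a finite number. On the other hand, since $P(M)$ is small in $\Cal M$, one can choose $a_1,\ldots,a_{k+1} \in (a,b)_M$ that are $\dcl_{\Cal L}$-independent over $P(M)$, and this independence transfers to independence over $P(U)$ because any dependence ``$a_i = g(p,\bar a)$ for some $p \in P^\ell$'' is a first-order statement with parameters in $M$ and would pull down to $\tilde{\Cal M}$ by elementarity of the pair. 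This contradiction is what actually rules out the covering; it is exactly the device that handles the external parameter $b$, and it is absent from your proposal.
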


\begin{proof}

Let $S\subseteq (a,b)_U$ be $E$-invariant, large, and $c$-definable for some $c \in U^n$. Assume for sake of contradiction that $S \cap M = \emptyset$. Then $(a,b)_M$ is contained in $(a,b)_U \setminus S$. 
As $(a,b)_U \setminus S$ is small, there is $m \in \N$ and a $c$-definable function $f:U^m \to U$ such that $((a,b)_U\setminus S)\subseteq f(P(U)^m)$. Therefore $(a,b)_M \subseteq \dcl_{\Cal L}(P(U)c)$. Fix $k=\dim(c/P(U))$, the $\dcl_\Cal L$-dimension of $c$ over $P(U)$. Then $\dim((a,b)_M/P(U))\le k$. As $P$ is small, there are $a_1,\ldots,a_{k+1}$ in $(a,b)_M$, $\dcl_{\Cal L}$-independent over $P(M)$. As $\tilde{\Cal M} \prec \Cal U$, $a_1,\ldots, a_{k+1}$ are $\dcl_{\Cal L}$-independent over $P(U)$. This contradicts $\dim((a,b)_M/P(U))\le k$. Therefore $S \cap M \not = \emptyset$, and $q(y)$ is finitely satisfiable. 

\end{proof}

\section{Distal Types}

We continue with $\tilde {\Cal M}$, $a$, $b$, $E$, and $q(y)$ as in Section \ref{GenericExistence}. Recall that $q(y)$ is the generic type on $(a,b)/E$. Let $\tilde{\Cal N} \succeq \tilde{\Cal M}$ be $|M|^+$-saturated. Most results in this section hold in the same generality as in Section \ref{GenericExistence}, but some will require further assumptions on $T_P$. These additional assumptions on $P(M)$ will be stated in the relevant theorems. Throughout this section, we assume that $\Cal L$ contains symbols for all $\Cal L$-definable functions. Thus for any $S\subseteq M$, we have that $\langle S \rangle = \{f(s): f \in \Cal L, s \in S\}$ is an $\Cal L$-substructure of $\Cal M$. 

\begin{lem}\label{SmallSurj}
Let $x = (x_1,\ldots, x_n)$ be a variable ranging over $M^n$. Let $p(x) \in S_x(M)$ be such that for each $i \in \{1,\ldots,n\}$, $p(x)\proves a<x_i<b$. Then

\begin{enumerate}

\item If $p(x)$ is weakly orthogonal to $q(y)$, the type-definable set $p(N)$ is small.

\item If $P(N)$ is a dense $\Cal L$-elementary substructure of $\Cal N$ and $p(N)$ is small, then $p(x)$ is weakly orthogonal to $q(y)$.

\end{enumerate}
\end{lem}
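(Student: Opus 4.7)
The plan is to use the fact that by the dichotomy theorem of Section~\ref{GenericExistence}, every $E$-invariant definable subset of $(a,b)$ is either small or has small complement, so that for every parameter set the canonical extension of $q$ is automatically complete; weak orthogonality of $p$ and $q$ then reduces to whether every realization of $q|M$ already realizes $q|Ma$ for $a\models p$. I prove the two parts separately.

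For (1), I argue the contrapositive: assume $p(N)$ is large. First I reduce to $n=1$ by projecting. Denoting by $p_i \in S_{x_i}(M)$ the projection of $p$ onto the $i$th coordinate, if each $p_i(N)$ were small then $p(N) \subseteq \prod_i p_i(N)$ would itself be small (since the product of finitely many small sets is small), contradicting the assumption. So some $p_i(N)$ is large, and a failure of weak orthogonality for $p_i$ and $q$ lifts to $p$ and $q$ by extending realizations. Assume $n=1$, so every $\varphi(x)\in p$ defines a large subset of $(a,b)$. Form the partial $M$-type
\[
q'(x) := \{x \in S : S \subseteq (a,b) \text{ is large, $E$-invariant, and $M$-definable}\}.
\]
Consistency of $p \cup q'$ follows by compactness: given $\varphi \in p$ (WLOG $\varphi \vdash a<x<b$) and large $E$-invariant $M$-definable $S_1,\ldots,S_k$, the intersection $\bigcap_i S_i$ is large and $E$-invariant, so if $\varphi \cap \bigcap_i S_i$ were empty then $\varphi$ would be contained in the small complement of $\bigcap_i S_i$ inside $(a,b)$, contradicting largeness of $\varphi$. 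Realize $a^* \models p \cup q'$ in $\Cal U$ and set $c := [a^*]_E$. By the definition of $q'$, $c$ realizes $q|M$; but the singleton $\{[a^*]_E\}$ is a small $Ma^*$-definable subset of the quotient containing $c$, so $c$ does not realize $q|Ma^*$. Together with any $c' \models q|Ma^*$ obtained by saturation, $\tp(a^*c/M)$ and $\tp(a^*c'/M)$ both extend $p\cup q$ but differ on the formula $y=[x]_E$, witnessing non-weak-orthogonality.

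For (2), assume $P(N)$ is a dense $\Cal L$-elementary substructure of $\Cal N$ and $p(N)$ is small. Then some $\psi \in p$ is small, witnessed by an $M$-definable $f$ with $\psi(\Cal U) \subseteq f(P(\Cal U)^\ell)$, so every $a \models p$ has the form $a = f(t)$ for some $t \in P(\Cal U)^\ell$. Fix $c \models q|M$ in $\Cal U$; I show $c \models q|Ma$. It suffices to prove that for any $M$-formula $\varphi(y,z)$ with $\varphi(y,a)$ small, $c \notin \varphi(y,a)$ (the other direction follows by applying the same argument to $\neg\varphi$). Define $T_\varphi := \{s : \varphi(y,f(s)) \text{ is small}\}$, which is $\emptyset$-definable by the remark following Lemma~\ref{PairDecomp}; since $\varphi(y,a) = \varphi(y,f(t))$ is small, $t \in T_\varphi$. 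The set
\[
U_\varphi := \{y : \exists s \in P^\ell \cap T_\varphi,\ \varphi(y,f(s))\}
\]
is $M$-definable and contains $\varphi(y,a)$. Applying a multi-sorted analog of Lemma~\ref{SmallProd} to $X := \{(s,y) : s \in P^\ell \cap T_\varphi,\ \varphi(y,f(s))\}$ --- whose projection to $s$ lies in the small set $P^\ell$ and whose fibers over $s$ are small by definition of $T_\varphi$ --- yields smallness of $X$, hence of its projection $U_\varphi$. Since $U_\varphi$ is a small $M$-definable set and $c \models q|M$, we conclude $c \notin U_\varphi$, whence $c \notin \varphi(y,a)$. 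This establishes $c \models q|Ma$, so $q|M$ has a unique completion over $Ma$, i.e.~$p$ is weakly orthogonal to $q$.

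The main obstacle I expect is the smallness verification for $U_\varphi$ in part (2): this requires a multi-sorted/higher-dimensional version of Lemma~\ref{SmallProd} (the stated version handles only $X \subset M\times M$), together with the uniformity in the decomposition of Lemma~\ref{PairDecomp} as the parameters vary in $P(\Cal U)^\ell$. The $\Cal L$-elementary hypothesis on $P(N)$ most likely enters precisely here, ensuring that the pair-tameness transfers cleanly to the multi-sorted setting involving $f(P^\ell)$. The reduction to $n=1$ and the compactness argument in (1) are, by contrast, purely finitary and use only the section-\ref{GenericExistence} dichotomy and Lemma~\ref{SmallProd} as stated.
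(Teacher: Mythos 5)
Your proposal is correct in substance, but the two halves sit differently relative to the paper. Part (1) is essentially the paper's own argument read contrapositively: the paper fixes $c\models p$, supposes some $c_i$ avoids every small $E$-invariant $M$-definable set, notes that then $\pi(c_i)$ realizes $q$ restricted to $M$, and uses the definable extension $q|Mc\pi(c_i)$ (nontrivial because $[c_i]_E$ is small) to produce two completions of $p\cup q$ differing on $y=\pi(x_i)$; your reduction to $n=1$ and the compactness step with $q'$ are just a repackaging of this. Part (2), however, takes a genuinely different route. The paper proves (2) by invoking the back-and-forth system $\Gamma$ of \cite{DensePairs}: given realizations $(d_1,e_1)$ and $(d_2,e_2)$ of $p\cup q$, it builds an isomorphism in $\Gamma$ (using that the $d_i$ lie in $f(P(N)^\ell)$ while $\pi^{-1}(e_i)$ is free from $\langle MP(N)\rangle$) and extends it to an automorphism of $\Cal N^{\eq}$; that is precisely where the hypothesis that $P(N)$ is a dense elementary substructure is used. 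You instead show directly that every $c\models q|M$ realizes the definable extension $q|Ma$ once $a\models p$ lies in $f(P(\Cal U)^\ell)$, by covering $\varphi(y,a)$ with the $M$-definable set $U_\varphi$ and proving its $\pi$-preimage small. Two points to tighten: first, ``small'' for definable subsets of the quotient sort must consistently be read as ``the $\pi$-preimage is small,'' since that is how $q$ is defined (with your union lemma the two readings coincide, but say so); second, the several-variable form of Lemma~\ref{SmallProd} that you flag is indeed not what this paper restates, but it is the general form of the cited Lemma 4.29 of \cite{PairStructure}, and in the dense-pair situation hypothesized in (2) it is classical, so the argument does close. Notably, granted that lemma, your proof of (2) never uses elementarity of $P(N)$ at all, so it is more general than the paper's and bears on the paper's own remark that part (2) ``should not be considered as optimal'': the paper's approach buys an explicit automorphism at the cost of needing a concrete back-and-forth system, while yours stays entirely inside the smallness calculus of \cite{PairStructure}.
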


\noindent Before continuing with the proof, we recall some facts from Section 2 of \cite{DensePairs} concerning a back and forth system between substructures of dense pairs of o-minimal structures. In our notation this is the case where the interpretation of the predicate $P$ is a dense elementary substructure of $\Cal M$. Unless explicitly stated, we will \emph{not} necessarily be working in this situation, but some results only hold in this specific case. We denote the back and forth system from \cite{DensePairs} by $\Gamma$, which we now translate into our notations. In the case of substructures of $\Cal N$, $\Gamma$ consists of isomorphisms $j:\Cal A_1 \cong \Cal A_2$, where for each $i \in \{1,2\}$, $|A_i|<|M|^+$, along with the property that $A_i$ and $P(N)$ are \emph{free} over $P(A_i)$. That is, that for any $S \subseteq A_i$:
$$S \text { is } \dcl_{\Cal L}\text{-independent over } P(A_i) \Rightarrow S \text{ is } \dcl_{\Cal L}\text{-independent over } P(N)$$

\begin{proof}

We first prove (1), by showing that if $c=(c_1,\ldots, c_n) \models p$, then for each $i \in \{1,\ldots, n\}$, $c_i$ lies in a small $E$-invariant $M$-definable unary set. Suppose not. Then without loss of generality we may assume $i=1$, and thus $\tp(\pi(c_1)|M) = q(y)$, so $(c,\pi(c_1)) \models p(x) \cup q(y)$. Then consider $d \models q(y)|Mc\pi(c_1)$, the definable extension of $q(y)$ to $M\cup\{c,\pi(c_1)\}$. As $[c_1]_E$ is small, we have that $d \neq \pi(c_1)$. As $(c,d) \models p(x) \cup q(y)$, we have that both $\pi(x_1) = y$ and $\pi(x_1) \neq y$ are consistent with $p(x) \cup q(y)$. Thus $p(x)$ is not weakly orthogonal to $q(y)$, contradicting our assumption. Therefore for each $i\in \{1,\ldots n\}$, $c_i$ lies in a small $E$-invariant $M$-definable set. 

We now prove (2). So we now add the further assumption that $P(N)$ is a dense, proper elementary $\Cal L$-substructure of $\Cal N$. Now suppose there is a small $M$-definable set $S\subset N^{n}$ such that if $c=(c_1,\ldots,c_n) \models p(x)$, then $c \in S$. We now show that $p(x)$ is weakly orthogonal to $q(y)$. Let $d_1,d_2 \in N$ be realizations of $p(x)$, and $e_1,e_2 \in \Cal N^{\eq}$ be realizations of $q(y)$. It suffices to find an automorphism of $\Cal N^{\eq}$ sending $(d_1,e_1)$ to $(d_2,e_2)$. Notice that this is equivalent to finding $e_1',e_2' \in N$ with $\pi(e_i')=e_i$ and an automorphism sending $(d_1,e_1')$ to $(d_2,e_2')$. To do this, we shall find $j:\Cal A_1 \cong \Cal A_2$ in $\Gamma$ such that $d_1,e_1'\in \Cal A_1$, $d_2,e_2' \in \Cal A_2$, $j(d_1)=d_2$, and $j(e_1')=e_2'$. 

Now since $d_1,d_2$ lie in a small $M$-definable set, there is an $M$-definable function $f:N^\ell \to N^n$ such that $d_1,d_2 \in f(P(N))$. Let $h_1 \in P(N)^\ell$ be such that $f(h_1)=d_1$. As $d_1$ and $d_2$ have the same $\Cal L_P$-type over $M$, there is $g \in \operatorname{Aut}_{\Cal L_P}(\Cal N|M)$ such that $g(d_1)=d_2$. Let $h_2 = g(h_1)$. We then set $\Cal A_i'=\langle Mh_i\rangle$ for $i=1,2$. Notice then that $f(h_2)=d_2$, and $\Cal A_1' \cong \Cal A_2'$ via $g$. Furthermore, as only members of $P(N)$ were added, this isomorphism is in $\Gamma$.

Recall that $\pi^{-1}(e_i)$ lies in no small $M$-definable set. Then as $\Cal M \preceq \Cal N$, $\pi^{-1}(e_i) \cap \langle M P(N) \rangle = \emptyset$. Choose $e_1' \in N \cap \pi^{-1}(e_1)$. By saturation of $\Cal N$ and density of $\pi^{-1}(e_2)$, there is $e_2' \in N \cap \pi^{-1}(e_2)$ realizing $\tp_{\Cal L}(e_1'|Mh_1h_2)$. Set $\Cal A_i = \langle A_i'e_i'\rangle$ for $i=1,2$. The map determined by $e_1'\mapsto e_2'$ extending the isomorphism $\Cal A_1'\cong \Cal A_2'$ is an isomorphism between $\Cal A_1$ and $\Cal A_2$. Recalling that $\pi^{-1}(e_i) \cap \langle M P(N) \rangle = \emptyset$, the isomorphism $\Cal A_1\cong \Cal A_2$ is in $\Gamma$. Thus there is an automorphism of $\Cal N^{\eq}$ such that $d_1 \mapsto d_2$ and $e_1 \mapsto e_2$. Therefore $p(x) \cup q(y)$ determines a complete type, so $p(x)$ is weakly orthogonal to $q(y)$. 
\end{proof}

\begin{lem}\label{InducedStructure}

Suppose that $S \subset M$ is small, and the induced structure on $P(M)$ from $\tilde{\Cal M}$ is distal. Then the induced structure on $S$ from $\tilde{\Cal M}$ is distal. 

\end{lem}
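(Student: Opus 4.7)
The plan is to realize the induced structure on $S$ as a definable quotient inside the induced structure on $P(M)$, and then to transfer distality via the $\eq$-construction. Let $\Cal P$ denote the induced structure on $P(M)$ from $\tilde{\Cal M}$ and $\Cal S$ that on $S$.

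Since $S$ is small, I would fix $\ell \in \N$ and a $\tilde{\Cal M}$-definable $f : M^\ell \to M$ with $S \subseteq f(P(M)^\ell)$. The set $D := f^{-1}(S) \cap P(M)^\ell$ and the equivalence relation $p \sim p'$ iff $f(p) = f(p')$ on $D$ are both $\tilde{\Cal M}$-definable, so they are definable in $\Cal P$. The map $f$ induces a bijection $D/\sim \;\to\; S$, presenting $S$ as the imaginary sort $D/\sim$ of $\Cal P^{\eq}$.

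The bulk of the work is to verify that, under this bijection, $\Cal S$ coincides with the induced structure on $D/\sim$ from $\Cal P^{\eq}$. In one direction, any $\tilde{\Cal M}$-definable $R \subseteq S^n$ pulls back via $(f,\ldots,f)$ to a $\sim$-invariant $\tilde{\Cal M}$-definable subset of $D^n$, hence to a subset definable in $\Cal P$, descending to $\Cal P^{\eq}$. In the other direction, any $\sim$-invariant $\tilde{\Cal M}$-definable $R' \subseteq D^n$ is the pullback of $R = \{(a_1,\ldots,a_n) \in S^n : \exists p_1,\ldots,p_n \in D,\ \bigwedge_i f(p_i) = a_i \wedge R'(p_1,\ldots,p_n)\}$, which is $\tilde{\Cal M}$-definable on $S^n$. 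Parameters from $M$ entering such definitions are handled uniformly on both sides, because $\Cal P$ already includes all $\tilde{\Cal M}$-definable relations on $P(M)^n$ with parameters from $M$.

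Once the identification is established, I would conclude by invoking the standard fact that distality of a theory implies distality of every sort of its $\eq$-expansion (as per the remark after the definition of distal type, appealing to Theorem 2.28 of \cite{Distal}). Since $\Cal P$ is distal by hypothesis, the induced structure on the sort $D/\sim$ is distal, and hence so is $\Cal S$. The main obstacle is the careful bookkeeping of definable relations and parameters in the previous paragraph; an alternative that avoids the $\eq$-machinery would lift an indiscernible sequence in $S$ to a jointly indiscernible sequence of preimages in $D$ via Ramsey extraction, and then apply distality of $\Cal P$ directly downstairs.
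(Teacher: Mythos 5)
Your proposal is correct and follows essentially the same route as the paper: present $S$ (via the definable surjection coming from smallness) as an imaginary sort of the induced structure on $P(M)$, identify the induced structures, and invoke the fact that distality passes to $T^{\eq}$. The only differences are cosmetic — you restrict the quotient to $f^{-1}(S)\cap P(M)^\ell$ rather than quotienting all of $P(M)^\ell$, and you spell out the bookkeeping of definable relations that the paper leaves implicit.
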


\begin{proof}

As $S$ is small, there is an $n \in \N$ and an $M$-definable function $f:M^n \to M$ such that $S \subseteq f(P(M)^n)$. Let us now consider the equivalence relation $\equiv$ on $P(M)^n$ given by $x \equiv y \Leftrightarrow f(x)=f(y)$. Notice that as $f$ is $M$-definable, this relation is a part of the $\tilde{\Cal M}$-induced structure on $P(M)^n$. Let $S'$ be the imaginary sort of $P(M)^n$ modulo this equivalence.  If a theory $T$ is distal, so is $T^{\eq}$ by results from \cite{Distal}. Thus the $\tilde{\Cal M}$-induced structure on $S'$ is distal. However, $S'$ and $S$ are in in $M$-definable bijection in $\tilde{\Cal M}^{\eq}$. Thus the induced structure on $S$ from $\tilde{\Cal M}$ is distal. 

\end{proof}

\begin{thm}\label{DistalChar}
Let $x = (x_1,\ldots, x_n)$ be tuple of variables ranging over the home sort, and $p(x) \in S_x(M)$ be as in Lemma \ref{SmallSurj}. Further assume that $p(x)$ is $A$-invariant for some $A\subset M$ small relative to the saturation of $\Cal M$. Suppose that the induced structure on $P(M)$ from $\Cal M$ is distal.
Then:

\begin{enumerate}

\item If $p(x)$ is weakly orthogonal to $q(y)$, then it is distal.

\item If $P(N)$ is a dense $\Cal L$-elementary substructure of $\Cal N$ and $p(x)$ is distal, then $p(x)$ is weakly orthogonal to $q(y)$. 

\end{enumerate}
\end{thm}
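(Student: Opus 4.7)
The plan is to prove the two implications separately. Direction (1) will combine Lemma \ref{SmallSurj}(1) with Lemma \ref{InducedStructure}, reducing distality of $p(x)$ to distality of an induced structure on a small set. Direction (2) will be proved by contrapositive: from largeness of $p(N)$ I will manufacture an indiscernible sequence that fails the distality condition, using a Morley sequence of an $A$-invariant extension of $p$ together with a well-chosen inserted element.

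For (1), I would first invoke Lemma \ref{SmallSurj}(1) to obtain smallness of $p(N)$, so that $p(N) \subseteq S := f(P(N)^k)$ for some $M$-definable $f$. Since $\tilde{\Cal M} \preceq \tilde{\Cal N}$, distality of the induced structure on $P(M)$ transfers to that on $P(N)$ from $\tilde{\Cal N}$, and the $\tilde{\Cal N}$-analog of Lemma \ref{InducedStructure} yields distality of the induced structure on $S$ from $\tilde{\Cal N}$. For any indiscernible sequence $(a_i)_{i \in I_1+(c)+I_2}$ of realizations of $p$ and any parameter $b$, each $a_i$ lies in $S^n$; $b$-indiscernibility in $\tilde{\Cal N}$ corresponds to indiscernibility in the induced structure on $S$ expanded by a name for $b$. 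Since distality is preserved under naming parameters, the distal property for $p(x)$ follows.

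For (2), assume $P(N)$ is a dense $\Cal L$-elementary substructure of $\Cal N$ and $p(x)$ is distal; by Lemma \ref{SmallSurj}(2), it suffices to show $p(N)$ is small. Suppose for contradiction that $p(N)$ is large. By the argument in the proof of Lemma \ref{SmallSurj}(1), some realization $c \models p$ satisfies $\tp(\pi(c_1)|M) = q(y)$, so $p$ is not weakly orthogonal to $q$. Using $A$-invariance of $p$ together with $\emptyset$-definability of $\tilde{q}$ (the definable extension of $q$), I would extend $p$ to an $A$-invariant global type $\tilde{p}$ whose first-coordinate pushforward modulo $E$ is $\tilde{q}$. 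Take a Morley sequence $(c^j)_{j \in \Z}$ of $\tilde{p}$ over $M$ and insert $c^* \models \tilde{p}|M(c^j)_{j \in \Z}$ at position $0$; the extended sequence remains $M$-indiscernible. Setting $b := c^*_1$, two claims complete the argument: (a) the subsequence $(c^j)_{j \neq 0}$ is $Mb$-indiscernible, because $A$-invariance of $\tilde{p}_1$ makes $\tp(b|M(c^{j_i}))$ depend only on $\tp((c^{j_i})|M)$; (b) the full sequence is not $Mb$-indiscernible, since the formula $x_1 = b$ holds at position $0$ but fails at every other position -- generic stability of $q$ together with its non-realization over $M$ force $\tilde{q}$ to be non-realized in $\Cal U^{\eq}$, so $\pi(c^j_1) \neq \pi(c^*_1)$ and hence $c^j_1 \neq b$ for $j \neq 0$.

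I expect the main obstacle to be the encoding step in (1): rigorously translating ``$b$-indiscernibility in $\tilde{\Cal N}$'' into indiscernibility in the induced structure on $S$ expanded by a name for $b$, especially when $b$ lies in $\Cal N^{\eq} \setminus N$. One must verify that every $\tilde{\Cal N}$-formula $\varphi(\bar{x}, b)$ restricted to $(S^n)^m$ is a relation in this expansion, and that distality persists under the enrichment. A secondary technical point in (2) is producing the invariant extension $\tilde{p}$ with the prescribed pushforward, which should follow by consistency and an amalgamation argument leveraging $\emptyset$-definability of $\tilde{q}$.
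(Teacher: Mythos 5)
Your part (1) is essentially the paper's argument (Lemma \ref{SmallSurj}(1) plus Lemma \ref{InducedStructure}, with the extra care about naming the parameter $b$ that the paper leaves implicit), and that half is fine. The problem is part (2), where your construction cannot work as stated.

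The gap: you take a Morley sequence $(c^j)_{j\in\Z}$ of an $A$-invariant extension $\tilde p$, insert $c^*\models\tilde p|M(c^j)_{j\in\Z}$ \emph{in the middle}, and assert the extended sequence is still $M$-indiscernible. It is not. Since $p$ proves $a<x_1<b$ and is non-algebraic, the first coordinates $c^j_1$ of any indiscernible sequence of realizations are strictly monotone, say increasing; then $\tilde p\vdash x_1>c^j_1$ for every $j$, so $c^*_1$ lies above \emph{all} terms, including those placed after it, and already order-indiscernibility of the full sequence fails. If you repair this by making the full sequence a genuine Morley sequence with $c^*$ occupying the middle position (so that it is indiscernible), then your claim (a) collapses instead: for $b:=c^*_1$ the formula $x_1<b$ holds of every term before $c^*$ and fails of every term after it, so the deleted sequence is not $b$-indiscernible. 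In fact no choice can rescue the witness you propose: in an ordered structure, whenever the full sequence is indiscernible and $b$ is the (real) first coordinate of the middle term, the order to $b$ separates the two halves of the deleted sequence, so the pair ``deleted sequence $b$-indiscernible, full sequence not'' can never be witnessed by $x_1=b$ with such a real parameter. Note also that your argument never uses the hypothesis actually in play, namely non-orthogonality to the generic quotient type $q$.

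This is precisely where the paper's proof does its real work: the parameter is taken to be the \emph{imaginary} $\pi(d'_e)$, the $E$-class of the middle element, so that the failing formula is $\pi(x)=\pi(d'_e)$ rather than $x_1=b$; since the class is dense and small it carries no order information about the position of $d'_e$. The full sequence is indiscernible by construction (a long sequence built from the invariant type, then extracted by Erd\"os--Rado), and the substantive step is showing the deleted sequence is $\pi(d'_e)$-indiscernible, which the paper does via the back-and-forth system $\Gamma$ of van den Dries: using $\Cal L$-definable independence of the sequence over $M\cup P(N)$, one picks $d''_e$ in the same $E$-class realizing the type at infinity over the relevant finitely generated substructures and extends the partial isomorphism while fixing $\pi(d'_e)$. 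Your proposal is missing this entire mechanism, and replacing it with a real-element parameter is not a technicality but the reason the approach fails.
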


\begin{proof}

We begin by proving (1). Let $p(x)$ be as in the theorem statement. Then by Lemma \ref{SmallSurj}, there is a small $M$-definable set $S \subseteq (a,b)$ such that $p(N) \subseteq S$. Now by Lemma \ref{InducedStructure}, the induced structure on $S$ is distal. Therefore the type $p(x)$ is distal. 

Now consider (2). We now further assume that $P(N)$ is a dense, proper $\Cal L$-elementary substructure of $\Cal N$. Suppose now that $p(x)$ is not weakly orthogonal to $q(y)$. Consider $c=(c_1,\ldots,c_n) \models p(x)$. For each $i \in \{1,\ldots n\}$, $c_i$ lies in no small $M$-definable set by Lemma \ref{SmallSurj}. Thus $\pi(c_1)\models q(y)$. It suffices to show that $\tp_{\Cal L_P}(c_1|M)$ is non-distal. We now assume that $\Cal N$ is $(2^{|M|})^+$-saturated, and by compactness and invariance of $p(x)$ find $(d_\alpha)_{\alpha \in (2^{|M|})^+}$ such that $d_\alpha \models \tp(c_1|M)|(M \cup \{d_\beta\}_{\beta<\alpha})$. Notice that by invariance of $p(x)$ we have that $\{d_\alpha\}_{\alpha < (2^{|M|})^+}$ is $\Cal L$-definably independent over $M \cup P(N)$. Otherwise some $d_\alpha$ would lie in an $M \cup \{d_\beta\}_{\beta<\alpha}$-definable small set. 

We now apply the Erd\"os-Rado Theorem to get an indiscernible sequence $(d'_j)_{j\in \Q + (e) +\Q}$ in $\Cal U$ such that for each $j_1<\ldots<j_m$ there are $\alpha_1<\ldots<\alpha_m$ such that $\tp(d'_{j_1}\ldots d'_{j_m}|M)=\tp(d_{\alpha_1}\ldots d_{\alpha_m}|M)$. Thus $\{d'_j:j\in \Q + (e) + \Q\}$ is also $\Cal L$-definably independent over $M\cup P(N)$. We now  show that the subsequence $\{d'_j:j\in \Q+\Q\}$ is $\pi(d'_{e})$-indiscernible. To do this, we again appeal to the back and forth system $\Gamma$. Let $i_1<\ldots<i_m$ and $j_1<\ldots<j_m$ be from $\Q+\Q$. Now consider $\Cal A_1'=\langle Md'_{i_1}\ldots d'_{i_m}\rangle$ and $\Cal A_2'=\langle Md'_{j_1}\ldots d'_{j_m}\rangle$. Notice that as the sequence $\{d'_j:j\in \Q+\Q\}$ is $M$-indiscernible and $\Cal L$-definably independent over $M \cup P(N)$, there is an isomorphism $g:A_1'\cong A_2'$ belonging to $\Gamma$ such that $g(d'_{i_k})=g(d'_{j_k})$ for $k \in \{1,\ldots m\}$. Now consider $\Cal A_i' \cap \pi^{-1}(\pi(d'_e))$. As $\{d'_j:j\in \Q+(e)+\Q\}$ is $\Cal L$-definably independent over $M \cup P(N)$, this intersection is empty. By saturation of $\Cal U$, there is $d''_e\in \pi^{-1}(\pi(d'_e))$ realizing the $\Cal L$-type at infinity over $\langle Md'_{i_1}\ldots d'_{i_m}d'_{j_1}\ldots d'_{j_m}\rangle$. Now consider $\Cal A_1=\langle Md'_{i_1}\ldots d'_{i_m}d''_e\rangle$ and $\Cal A_2 = \langle Md'_{j_1}\ldots d'_{j_m}d''_e\rangle$. By choice of $d''_e$, $g$ extends to an isomorphism $\hat{g}:\Cal A_1 \cong \Cal A_2$ with $\hat{g}(d'_{i_k})=\hat{g}(d'_{j_k})$ for each $k\in \{1,\ldots,m\}$ and $\hat{g}(d''_e)=\hat{g}(d''_e)$. Furthermore, as $d'_e$ was definably independent from $\{d_{i_1},\ldots d_{i_m},d_{j_1},\ldots d{j_m}\}$ over $M \cup P(N)$, so is $d''_e$ and thus $\hat{g}$ is in $\Gamma$. Thus $\hat{g}$ extends to an automorphism of $\Cal U^{\eq}$ fixing $\pi(d'_e)$ mapping $d'_{i_k}$ to $d'_{j_k}$ for each $k \in \{1,\ldots m\}$. As $i_1<\ldots<i_m$ and $j_1<\ldots<j_m$ were arbitrary, the sequence $\{d'_j:j\in \Q + \Q\}$ is $\pi(d'_e)$-indiscernible. Then, as $\pi(d'_j)=\pi(d'_e)$ holds if and only if $j=e$, the full sequence $\{d'_j:j\in \Q+(e)+\Q\}$ is not $\pi(d'_e)$-indiscernible, so $p(x)$ is non-distal. 
\end{proof}

\noindent We now consider some classes of examples where these results hold. In the case where $\Cal N \models T_P$ is a dense pair of expansions of o-minimal groups, it follows from Theorem 2 of \cite{DensePairs} that the induced structure on $P(N)$ is weakly o-minimal, and hence distal. Thus the results in this section and the previous completely characterize the invariant distal types, answering Question 2 positively. \newline

\noindent The second parts of both Lemma \ref{SmallSurj} and Theorem \ref{DistalChar} should not be considered as optimal. However, it appears that such results require some familiarity with a back and forth system for the desired concrete structures. For example, one may consider the structure of $(\R;+,\times,<,2^{\Q})$, or more generally an expansion of a real closed field by a dense multiplicative group with the Mann property. Our result with $a=0,b=\infty$ gives a generically stable type in the multiplicative quotient $(0,\infty)/2^{\Q}$. One can prove the analogues of the second parts of Lemma \ref{SmallSurj} and Theorem \ref{DistalChar}, and thus obtain a similar characterization of distal types. \newline

\noindent However, this result does not suffice in allowing one to study distal types in all of the non-distal theories from \cite{DNDP}. If $P(M)$ is a dense $\dcl_{\Cal L}$-independent set, then the resulting theory has elimination of imaginaries. Hence there is no such equivalence relation with dense, small classes for the construction in Section \ref{GenericExistence}. Thus there is no imaginary sort in $T_P^{\eq}$ to search for such a type $q(y)$. 

\section{Measures}\label{Measures}

We begin by recalling some definitions concerning Keisler measures. These definitions are not given in the fullest generality possible, but rather in the context we work in. In this section, fix a first order theory $T$ and $\Cal M\models T$. For $B\subseteq M$, by $\text{Def}_{M}(B)$, we mean the $B$-definable subsets of $M$. 

\begin{defn}

A \emph{Keisler Measure} $\mu$ on $M$ is a finitely additive probability measure on $\text{Def}_{M}(M)$. 

\end{defn}

\begin{remark}

Notice for any $\Cal N\succ \Cal M$ that $\text{Def}_M(N)$ can be  viewed as a boolean subalgebra of $\operatorname{Def}_{N}(N)$. General facts about measures then allow us to extend $\mu$ to $N$. 

\end{remark}

\noindent Due to this remark, for $M$-definable sets, we shall not distinguish between their measure considered as a subset of $N$ or as a subset of $M$. We now recall that this extension of measures is free, subject to minimal constraint. For a proof of the following result, see Theorem 7.4 of \cite{SimonBook}.

\begin{thm}\label{MeasureExt}

Let $\Cal N\succeq \Cal M$, $\mu$ be a Keisler measure on $M$, and $S \subseteq N$ be $N$-definable. Suppose that $$\sup\{\mu(S'): S'\subseteq S \text{ is in } \operatorname{Def}_M(N)\}\le r \le \inf\{\mu(S'):S \subseteq S' \text{ is in }\operatorname{Def}_M(N)\}$$
Then there is an extension of $\mu$ to $N$ such that $\mu(S)=r$. 

\end{thm}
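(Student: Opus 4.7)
The plan is a two-stage extension argument standard in the theory of finitely additive measures. \emph{Stage 1:} extend $\mu$ from $\operatorname{Def}_M(N)$ to the Boolean subalgebra $\mathcal{B}'$ of $\operatorname{Def}_N(N)$ generated by $\operatorname{Def}_M(N)\cup\{S\}$, arranging $\mu(S)=r$. \emph{Stage 2:} extend from $\mathcal{B}'$ to all of $\operatorname{Def}_N(N)$ by Zorn's lemma, using the poset of finitely additive probability extensions ordered by restriction. Chains have upper bounds via union, and any maximal element $(\mathcal{B}^*,\mu^*)$ must satisfy $\mathcal{B}^*=\operatorname{Def}_N(N)$: if some $T$ lay outside its domain, the interval $[\mu^*_*(T),\mu^{**}(T)]$ is nonempty (by monotonicity of $\mu^*$), so a fresh application of Stage 1 inside $(\operatorname{Def}_N(N),\mathcal{B}^*,\mu^*)$ would yield a strict extension, contradicting maximality.

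Stage 1 is where the sup/inf condition on $r$ is actually used. The natural move is to translate the problem into one about positive linear functionals on simple functions. Let $V_0$ denote the $\R$-vector space of $\operatorname{Def}_M(N)$-simple functions on $N$; on $V_0$, $\mu$ defines a positive linear functional with $\mu(\mathbf{1}_N)=1$. One may assume $S\notin \operatorname{Def}_M(N)$ (otherwise the sup/inf interval collapses to $\{\mu(S)\}$ and there is nothing to prove), so $V_1:=V_0\oplus \R\cdot\mathbf{1}_S$ is a proper extension in which every element admits a unique decomposition $f+c\,\mathbf{1}_S$. Define $\tilde\mu(f+c\,\mathbf{1}_S):=\mu(f)+cr$. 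The essential check is positivity. If $f+c\,\mathbf{1}_S\geq 0$ with $c>0$, then $-f/c\leq \mathbf{1}_S$, and the set $A:=\{-f/c>0\}$ lies in $\operatorname{Def}_M(N)$ and is contained in $S$, so $\mu(A)\leq r$ by hypothesis; since $-f/c\leq \mathbf{1}_A$ pointwise, $\mu(-f/c)\leq \mu(A)\leq r$, giving $\mu(f)+cr\geq 0$. The case $c<0$ is symmetric and uses the upper bound on $r$. Hence $\tilde\mu$ is a positive linear functional on $V_1$ with $\tilde\mu(\mathbf{1}_N)=1$, which is the same data as a finitely additive probability measure on $\mathcal{B}'$ with $\tilde\mu(S)=r$.

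The main obstacle is Stage 1: declaring $\mu(S)=r$ has to be compatible with finite additivity across \emph{every} Boolean combination of $S$ with $\operatorname{Def}_M(N)$-sets, not merely with $S$ alone. Packaging the question at the level of simple functions condenses all such compatibility constraints into a single positivity check, and the decisive observation is that any obstruction to positivity of $f+c\,\mathbf{1}_S$ is already witnessed by one $\operatorname{Def}_M(N)$-subset of $S$ (when $c>0$) or one $\operatorname{Def}_M(N)$-superset of $S$ (when $c<0$) — precisely the witnesses that the sup/inf hypothesis controls. Once Stage 1 is in hand, Stage 2 is purely formal and the theorem follows.
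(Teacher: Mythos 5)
For reference, the paper does not prove this theorem at all: it cites Theorem 7.4 of Simon's book, so the comparison is with the standard argument, and your outline is essentially that argument. Your Stage 1 is correct and contains the real content: the positivity check on $V_1=V_0\oplus\mathbb{R}\cdot\mathbf{1}_S$ reduces exactly to the sup/inf hypothesis via the witnesses $\{-f/c>0\}\subseteq S$ (for $c>0$) and $\{f/|c|\ge 1\}\supseteq S$ (for $c<0$), both of which lie in $\operatorname{Def}_M(N)$ because $f$ is a $\operatorname{Def}_M(N)$-simple function (and the uniqueness of the decomposition $f+c\,\mathbf{1}_S$ does hold, since $\mathbf{1}_S\in V_0$ would force $S\in\operatorname{Def}_M(N)$, level sets of $\operatorname{Def}_M(N)$-simple functions being $M$-definable).

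The gap is in the bridge between the two stages. A positive normalized functional on $V_1$ is \emph{not} ``the same data as'' a finitely additive probability on the algebra $\mathcal{B}'$ generated by $\operatorname{Def}_M(N)\cup\{S\}$: the $\mathcal{B}'$-simple functions include $\mathbf{1}_{A\cap S}$ for $A\in\operatorname{Def}_M(N)$, which do not belong to $V_1$, so $\tilde\mu$ assigns them no value and finite additivity on $\mathcal{B}'$ has not been checked. Consequently Stage 2, which runs Zorn over measures on Boolean subalgebras and extends a maximal $(\mathcal{B}^*,\mu^*)$ past $T\notin\mathcal{B}^*$ by ``a fresh application of Stage 1'', is really invoking the one-step \emph{measure} extension on the generated algebra (the \L{}o\'{s}--Marczewski lemma), which your Stage 1 as proved does not deliver. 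The repair is routine and keeps your computation intact: run the whole argument at the level of positive functionals $\varphi$ with $\varphi(\mathbf{1}_N)=1$ on linear subspaces $W$ of the space of $\operatorname{Def}_N(N)$-simple functions containing $V_0$. Adjoin $\mathbf{1}_S$ with value $r$ exactly as in your Stage 1; for any later indicator $\mathbf{1}_T\notin W$, the interval $\bigl[\sup\{\varphi(w):w\in W,\ w\le\mathbf{1}_T\},\ \inf\{\varphi(w):w\in W,\ w\ge\mathbf{1}_T\}\bigr]$ is nonempty by positivity alone (no witness sets are needed at these later steps), and the same computation as yours --- with $\varphi(-w/c)\le\sup\le\rho$ in place of $\mu(A)\le r$ --- shows that any value $\rho$ in this interval preserves positivity. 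A maximal element given by Zorn must then have every indicator in its domain, and restricting it to indicators yields the required extension of $\mu$ with value $r$ at $S$ (finite additivity is immediate from linearity). Equivalently, after your Stage 1 you may simply cite the Riesz--Kantorovich positive extension theorem, using that $\mathbf{1}_N\in V_1$ is an order unit, and then restrict to indicators.
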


\noindent So far we have been working with indiscernible sequences to study distality and distal types. We now introduce the relevant notions to define and study distal behavior through the lens of Keisler measures.

\begin{defn}

Let $\mu$ be a Keisler measure on $M$. We say $\mu$ is \emph{smooth} (over $M$) if for each $\Cal N \succeq \Cal M$ there is a unique extension of $\mu$ to $\text{Def}_N(N)$. 

\end{defn}

\noindent We again make a definition involving a monster model $\Cal U$. As before, this definition may be localized to not mention this monster model at the cost of quantifier complexity of the definition. 

\begin{defn}

Let $\mu$ be a Keisler measure on $U$. We say that $\mu$ is \emph{generically stable} if there is a small $\Cal M \prec \Cal U$ such that:

\begin{enumerate}

\item ($\mu$ is definable over $M$) For each $\epsilon > 0$ and definable $R \subseteq U \times U_y$, there is an $M$-definable partition $S_1,\ldots, S_n$ of $U_y$ such that for each $i \in \{1,\ldots,n\}$ and $b,b' \in S_i$, $|\mu(R(b))-\mu(R(b'))|<\epsilon$. 

\item ($\mu$ is finitely satisfiable in $M$) For each definable $S \subseteq \Cal U$, if $\mu(S)>0$ then $S \cap M \neq \emptyset$. 

\end{enumerate}

\end{defn}

\begin{remark}

\noindent Notice that if a Keisler measure $\mu$ over $M$ is definable, then for any $\Cal N\succeq \Cal M$ there is a canonical extension of $\mu$ to $N$. 

\end{remark}

\noindent We now can note an alternate definition for distality. A theory $T$ is distal if every generically stable measure (over some monster model) is smooth (see \cite{Distal}). We now continue with examining a specific generically stable, non-smooth measure.

\subsection{A Geometric Proof of Non-Distality in Certain Pairs}
Now, let $\Cal M = (M;<,+,0,1\ldots)$ be an o-minimal expansion of an ordered group in language $\Cal L$, and $P$ be subset satisfying the tameness conditions in \cite{PairStructure}. While these are the same assumptions as in Section \ref{GenericExistence}, we \emph{do not} assume the existence of an equivalence relation with dense, small classes. Then consider the dense pair $(\Cal M, P)$. We shall characterize the non-distality of this pair, working in the language $\Cal L_P = \Cal L \cup \{P\}$ where $P$ is interpreted as picking out the subset $P$. This determines a complete $\Cal L_P$-theory which we shall call $T_P$. \newline

\noindent In the case that $M = \mathbb R$, we can define a Keisler Measure on the $\Cal L_P(M)$-definable sets by $\mu(S) = \lambda(S \cap (0,1))$, where $\lambda$ is the Lebesgue measure. Now for any small $S\subseteq M$, $\mu(S) = 0$. Furthermore, if $0<a<b<1$ then $\mu((a,b))=b-a$. Notice that by Lemma \ref{PairDecomp}, this completely determines the measure of any definable $S \subseteq (0,1)$. \newline

\noindent We now generalize this to the case that $M \neq \mathbb R$. Fix $t \in M$ with $t>0$. For $a \in (0,t)$, we define $\text{st}(a) = \sup\{q\in \Q:qt\le a\}$. Notice that for any $a<b \in (0,t)$ with $a+b <t$ we have $\text{st}(a+b)=\text{st}(a)+\text{st}(b)$. We now define a Keisler measure on $M$ by setting $\mu(S)=0$ when $S\subseteq M$ is small, and for $0<a<b<t$, we set $\mu((a,b))=\st(b)-\st(a)$. Again by Lemma \ref{PairDecomp}, this uniquely determines a measure on all definable subsets of $M$. Throughout this section fix $\mu$ as this measure. 

\begin{remark}
Notice here that this construction in fact \emph{does not} require that $P$ be dense in all of $M$, but rather that it is dense in the interval $(0,t)$. We work in the context that $P$ is dense for ease of presentation.
\end{remark}

\begin{lem}
The measure $\mu$ is $\emptyset$-definable. 
\end{lem}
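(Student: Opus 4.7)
The plan is to combine Lemma~\ref{PairDecomp} with a rational approximation of the standard part function $\st$. Fix any $\emptyset$-definable family $(R_s)_{s \in U^\ell}$ of subsets of $U$ and any $\epsilon > 0$; the goal is to produce a finite $\emptyset$-definable partition of $U^\ell$ on which $s \mapsto \mu(R_s)$ varies by less than $\epsilon$.

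First, I would apply Lemma~\ref{PairDecomp} to $(R_s)_{s\in U^\ell}$, obtaining a uniform $m \in \N$, $\emptyset$-definable cut points $a_0(s) \le a_1(s) \le \cdots \le a_m(s)$ in $U \cup \{\pm\infty\}$, and $\emptyset$-definable small families $V_{i,s}$ realizing the small/co-small dichotomy on each piece $[a_{i-1}(s), a_i(s)]$. The remark following Lemma~\ref{PairDecomp} tells us that smallness of the $i$-th piece is a $\emptyset$-definable property of $s$; hence $U^\ell$ can be partitioned into at most $2^m$ $\emptyset$-definable pieces on which the subset $C \subseteq \{1,\ldots,m\}$ of co-small indices is constant.

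On any such piece, set $b_i(s) = \max(0, \min(t, a_i(s)))$. Since $\mu$ vanishes on small sets and on sets disjoint from $(0,t)$, the measure collapses to
\[
\mu(R_s) \;=\; \sum_{i \in C} \bigl(\st(b_i(s)) - \st(b_{i-1}(s))\bigr).
\]
Next, I would choose rationals $0 = q_0 < q_1 < \cdots < q_N = 1$ with consecutive spacing less than $\epsilon/(2m)$. The condition $q_j \cdot t \le b_i(s) < q_{j+1} \cdot t$ is $\emptyset$-definable (treating $t$ as $\emptyset$-definable, e.g.\ taking $t=1$). Partitioning further by the tuple $(j_0(s), \ldots, j_m(s))$ locating each $b_i(s)$ in the rational grid pins each $\st(b_i(s))$ to within $\epsilon/(2m)$, whence $\mu(R_s)$ varies by at most $\epsilon$ on each refined piece.

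The main obstacle I anticipate is bookkeeping around the truncation and the $\emptyset$-definability of the case indicator. The truncation $b_i = \max(0,\min(t,a_i))$ is cosmetically clean but requires checking that contributions outside $[0,t]$ collapse, which follows from $\mu(R_s) = \mu(R_s \cap (0,t))$ combined with the facts that $\st(0) = 0$ and $\st(t) = 1$. The $\emptyset$-definability of smallness, guaranteed by the remark after Lemma~\ref{PairDecomp}, is the key ingredient making the ``case pattern'' partition $\emptyset$-definable.
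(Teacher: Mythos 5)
Your proposal is correct and follows essentially the same route as the paper: apply Lemma~\ref{PairDecomp}, use the $\emptyset$-definability of smallness to fix the pattern of co-small pieces, and then approximate $\st$ by a definable rational grid on the cut points. The only (cosmetic) differences are that the paper assumes $R(x,c)\subseteq(0,t)$ outright rather than truncating the $a_i$, and it telescopes the sum via additivity of $\st$ so that a single grid of mesh $1/k<\epsilon/2$ suffices, whereas you grid each cut point with mesh $\epsilon/(2m)$.
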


\begin{proof}
Let $R \subseteq M \times M^n$ be $\emptyset$-definable. We may assume that for each $c \in M^n$ that $R(x,c)\subseteq (0,t)$. Using Lemma \ref{PairDecomp}, we find $\emptyset$-definable functions $a_0,\ldots a_m$ such that for each $c \in M^n$, $0\le a_0(c)\le\ldots \le a_m(c)\le t$ and for $i \in \{1,\ldots m\}$, $[a_{i-1}(c),a_i(c)]\cap R(x,c)$ is either small or co-small. Denote this set as $R_i(x,c)$. Recall that for each $i$ the collection of $c \in M^n$ such that $[a_{i-1}(c),a_i(c)]\cap R(x,c)$ is co-small is $\emptyset$-definable. Notice then that $\mu(R(x,c)) = \sum_{i=1}^m \delta_i(c)\st(a_{i}(c)-a_{i-1}(c))$, where 

\[\delta_i(c) =\begin{cases}
0 & \text{if $R_i(x,c)$ is small}\\
1 & \text{if $R_i(x,c)$ is co-small}
\end{cases}\]

Let $S_c = \{i \in \{1,\ldots m\}:\delta_i=1\}$. Notice that the equivalence relation of $c \equiv c' \Leftrightarrow S_c= S_{c'}$ is $\emptyset$-definable. For ease of notation, we now assume there is one equivalence class and set $S_c = S$. Then for each $c \in M^n$, $$\mu(R(x,c)) = \sum_{i \in S}\st(a_{i}(c)-a_{i-1}(c)) = \st\left(\sum_{i \in S}a_{i}(c)-a_{i-1}(c)\right)$$

Now let a real number $\epsilon > 0$ be given. Choose $k \in \N$ such that $\frac{1}{k} < \frac{\epsilon}{2}$. Now for $j \in {1,\ldots, k}$, set $$B_j = \{c \in M^n: \sum_{i \in S}\left(a_{i}(c)-a_{i-1}(c)\right)\in [\frac{j-1}{k}t,\frac{j}{k}t]\}$$

Notice now that if $c \in B_j$, then $\mu(R(x,c)) \in [\frac{j-1}{k},\frac{j}{k}]$. Thus if $c,c' \in B_j$, we have that $|\mu(R(x,c))-\mu(R(x,c'))|<\epsilon$, and therefore $\mu$ is $\emptyset$-definable. 

\end{proof}

\begin{lem}

The measure $\mu$ is finitely satisfiable. 

\end{lem}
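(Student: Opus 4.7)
The plan is to combine the explicit measure formula of the previous lemma with the empty-interior property of small sets in the pair $(\Cal M, P)$. Given $\varphi(x,c)$ with $\mu(\varphi(x,c)) > 0$, I will first extract a standardly long interval on which $\varphi(x,c)$ is co-small, cut it down to a sub-interval with endpoints in $M$, and then exhibit a witness in $M$ avoiding the small complement.

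Applying Lemma \ref{PairDecomp} to the $\emptyset$-definable formula $\varphi(x,y)$ produces $\emptyset$-definable functions $a_0, \ldots, a_m$ and a family of small sets $V_i$. The formula $\mu(\varphi(x,c)) = \sum_{i \in S_c} \st(a_i(c) - a_{i-1}(c))$ from the preceding lemma then yields an index $i$ for which $\varphi(x,c) \cap [a_{i-1}(c), a_i(c)] = [a_{i-1}(c), a_i(c)] \setminus V_i(c)$ and $\st(a_i(c) - a_{i-1}(c)) > 0$. Choosing rationals $r_1 < r_2$ strictly between $\st(a_{i-1}(c))$ and $\st(a_i(c))$ and setting $m_j := r_j t$, one gets $m_1, m_2 \in M$ with $a_{i-1}(c) < m_1 < m_2 < a_i(c)$, so $\varphi(x,c) \cap [m_1, m_2] = [m_1, m_2] \setminus V_i(c)$.

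The remaining task is to find $m \in M \cap (m_1, m_2)$ with $m \notin V_i(c)$. Write $V_i(c) \subseteq g(P(U)^k, c)$ for some $\emptyset$-definable $g$. Suppose for contradiction that $(m_1, m_2)_M \subseteq V_i(c)$. One argues via transfer through $\Cal M \preceq \Cal U$: the condition expressing that $g(P^k, y)$ meets each $M$-point of $(m_1, m_2)$ can be arranged as $\Cal L_P$-data that transfers, producing $c'' \in M^n$ with $g(P(M)^k, c'') \supseteq (m_1, m_2)_M$. But a small $M$-definable set has empty interior in $M$---a standard consequence of the tameness conditions of \cite{PairStructure}---so this is impossible. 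Hence $[m_1,m_2]_M \setminus V_i(c) \neq \emptyset$, providing $m \in M$ with $\varphi(m, c)$.

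The main obstacle is this transfer step: the naive first-order formulation $\forall x (m_1 < x < m_2 \to V_i(x,y))$ is too strong, since it would force the $U$-interval itself to be small, which is already contradicted by positive measure. The correct transfer must use only $M$-points of the interval and invoke a stable-embeddedness-type property saying that $V_i(c) \cap M$, though externally defined by $c \in U$, admits an internal $M$-definable small description. This structural input from the pair $(\Cal M, P)$ is where the tameness conditions of \cite{PairStructure} do their essential work.
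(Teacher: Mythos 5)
Your reduction to an interval with endpoints in $M$ is fine (divisibility of the group gives $r_1t,r_2t\in M$, and the measure formula does produce an index $i$ with a co-small piece of positive standard length), and up to that point you are following essentially the same route as the paper. The gap is in the last step, and you have in effect flagged it yourself without closing it: you need that not every point of $(m_1,m_2)_M$ lies in the small set $V_i(c)\subseteq g(P(U)^k,c)$, whose parameters live in $\Cal U$, and your proposed fix is a ``transfer'' producing $c''\in M^n$ with $g(P(M)^k,c'')\supseteq (m_1,m_2)_M$. As you note, the statement ``every $M$-point of $(m_1,m_2)$ lies in $g(P^k,c)$'' is not a first-order condition on $c$ that $\Cal U$ satisfies, so there is nothing to transfer; and the substitute you invoke --- that the trace $V_i(c)\cap M$ of an externally definable small set is covered by an $M$-definable small set --- is a genuinely nontrivial structural fact which you neither prove nor locate among the lemmas quoted from \cite{PairStructure}. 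As written, the proof therefore stops exactly at the point where the real work has to happen.

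The paper closes this step without any internalization of the parameters, by the $\dcl_{\Cal L}$-dimension count already used in Theorem \ref{TypeFinSat}: if $(m_1,m_2)_M\subseteq g(P(N)^k,c)$ then every element of $(m_1,m_2)_M$ lies in $\dcl_{\Cal L}(P(N)\cup\{c\})$, so $\dim\bigl((m_1,m_2)_M/P(N)\bigr)\le \dim(c/P(N))=:k<\infty$; on the other hand, smallness of $P(M)$ gives $k+1$ elements of $(m_1,m_2)_M$ that are $\dcl_{\Cal L}$-independent over $P(M)$, and these remain $\dcl_{\Cal L}$-independent over $P(N)$ because $\tilde{\Cal M}\prec\tilde{\Cal N}$ (any dependence over $P(N)$ is witnessed by an $\Cal L_P$-formula over $M$ and would reflect to $\Cal M$). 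This contradiction is exactly what your argument is missing; replacing your transfer/stable-embeddedness step by this dimension argument would make the proof correct and bring it in line with the paper's.
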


\begin{proof}

Let $\Cal N \succ \Cal M$ and set $\nu$ as $\mu | N$, the definable extension of $\mu$ to $N$. Let $S= \varphi(N,a)$ for some $\varphi \in \Cal L_P$ and $a \in N^n$. Suppose that $\nu(S)>0$. Furthermore, we may reduce to the case that $S \subseteq (0,t)_N$. Again using Lemma \ref{PairDecomp}, there are $M$-definable functions $a_0,\ldots, a_m$ such that for each $c \in N^n$, $0 \le a_0(c)\le \ldots \le a_m(c)\le t$, and for each $i \in \{1,\ldots m\}$, $[a_{i-1}(c),a_{i}(c)]_N\cap \varphi(N,c)$ is either small or co-small. As $\nu(S)>0$, for at least one such $i$ this intersection is co-small. Thus we may reduce to the case that $S$ is co-small. Suppose now for sake of contradiction that $S \cap M = \emptyset$. Then $M$ is small, as $M \subseteq (N \setminus S)$. Therefore there is an $a$-definable function $f:N^\ell \to N$ such that $M \subseteq f(P(N))^\ell$. Arguing similarly using $\dcl_{\Cal L}$-dimension as in the conclusion of Theorem \ref{TypeFinSat}, we reach a contradiction. 

\end{proof}

\noindent To complete showing the non-distality, it remains to show that $\mu$ is non-smooth. In the following result, similarly to as in Section \ref{GenericExistence}, we use $(a,b)_M = \{x \in M:a<x<b\}$ to denote the interval $(a,b)$ as a definable subset of $M$. 

\begin{thm}

The measure $\mu$ is non-smooth. 

\end{thm}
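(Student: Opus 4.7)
My plan is to find an $|M|^+$-saturated elementary extension $\tilde{\mathcal{N}}\succeq\tilde{\mathcal{M}}$ together with an $N$-definable set $S$ such that the supremum of $\mu(T)$ over $M$-definable subsets $T\subseteq S$ is strictly less than the infimum over $M$-definable supersets of $S$, so that Theorem \ref{MeasureExt} produces many distinct extensions of $\mu$. The set $S$ will be a ``generic translate'' of the predicate: I choose $c\in (0,t)_N$ avoiding every set of the form $f(P(N)^\ell)$ for $f$ an $M$-definable $\mathcal{L}$-function. Such a $c$ exists by saturation, because each $f(P(N)^\ell)$ is small while $(0,t)_N$ is not---the same dimension argument that closes the proof of Theorem \ref{TypeFinSat} shows that any finite union of such sets misses some point of $(0,t)_N$. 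I then set
$$S \;=\; \{x\in N : 0<x<t,\ P(x-c)\} \;=\; (c+P(N))\cap(0,t)_N,$$
an $N$-definable small set not definable over $M$.

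I first claim that every $M$-definable $T\subseteq S$ must be empty, so the supremum of $\mu(T)$ over such $T$ is $0$. Suppose to the contrary that some $x\in T\cap M$ exists; then $x-c\in P(N)$, so $c=x-p$ for some $p\in P(N)$, placing $c$ in $\dcl_{\mathcal{L}}(M\cup P(N))$ and contradicting the choice of $c$. Hence $T\cap M=\emptyset$, and $\mathcal{L}_P$-elementarity forces $T=\emptyset$ in $N$.

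The core of the argument is the bound $\mu(T)\geq \st(t)$ for every $M$-definable $T\supseteq S$. Applying Lemma \ref{PairDecomp} to the singleton family $(T)$ produces a decomposition $a_0<\ldots<a_m$ in $M\cup\{\pm\infty\}$ together with small $M$-definable sets $V_i$ such that $T\cap(a_{i-1},a_i)$ equals either $V_i$ or $(a_{i-1},a_i)\setminus V_i$. On any interval $(a_{i-1},a_i)$ meeting $(0,t)_N$, the set $(c+P(N))\cap(a_{i-1},a_i)\cap(0,t)_N$ is nonempty and dense by density of $P(N)$. If $T\cap(a_{i-1},a_i)$ were the small piece $V_i$, then $V_i$ would contain this dense subset, and for any witnessing $p\in P(N)$ we would have $c=(c+p)-p\in V_i-P(N)$. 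But by Lemma \ref{SmallProd} the product $V_i\times P(N)$ is small, hence its image under subtraction is an $M$-definable small set of the form $f(P(N)^{\ell+1})$ for an $\mathcal{L}$-definable $f$, contradicting the choice of $c$. So every such interval falls into the co-small case, each piece contributes its full $\st$-length to $\mu(T)$, and the contributions telescope to $\mu(T\cap(0,t)_N)=\st(t)$. Combined with the previous paragraph, Theorem \ref{MeasureExt} then allows $\mu(S)$ to be freely assigned any value in $[0,\st(t)]$, yielding multiple extensions and therefore non-smoothness.

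The main technical hurdle will be the definability-and-smallness bookkeeping for the auxiliary set $V_i-P(N)$: one must invoke Lemma \ref{SmallProd} to combine the smallness of $V_i$ and $P(N)$ into a single $M$-definable small set of the form $f(P(N)^{\ell+1})$, and then verify that the genericity clause imposed on $c$ simultaneously excludes membership in every such set arising across all possible decompositions of all $M$-definable supersets of $S$.
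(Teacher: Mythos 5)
Your proposal is correct and takes essentially the same route as the paper: the paper chooses $b \in N$ that is $\mathcal{L}$-definably independent from $M \cup P(N)$, works with the coset $b+P(N)$, shows via Lemma \ref{PairDecomp}, density of $P(N)$, and the smallness contradiction that every $M$-definable subset has measure $0$ and every $M$-definable superset has measure $1$, and then applies Theorem \ref{MeasureExt}. Your restriction to $(0,t)$ and the explicit $V_i - P(N)$ bookkeeping are only cosmetic differences from the paper's argument.
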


\begin{proof}

Let $\Cal N \succeq \Cal M$ be $|M|^+$-saturated. By saturation of $\Cal N$ and the smallness of $P(N)$, there is $b \in N$ that is $\Cal L$-definably independent from $M \cup P(N)$. Thus for every $M$-definable $f:N^\ell \to N$, $b \not \in f(P(N)^\ell)$. Furthermore, even $(b+P(N)) \cap f(P(N)^\ell))=\emptyset$, as otherwise $b \in f(P(N)^\ell)\setminus P(N)$.

Let $S \subseteq N$ be $M$-definable. Suppose that $S \subseteq b+P(N)$. As $S$ is $M$-definable, either $S = \emptyset$ or there is $a \in M \cap S$. In the second case $a \in S$ which is a subset of $b+P(N)$. This is a contradiction, as then $b+P(N)$ intersects the image of the constant function $x \mapsto a$. Therefore $S\cap M=\emptyset$, and $\mu(S)=0$.  

Now consider the case that $b+P(N)\subseteq S$ for some $M$-definable set $S$. By Lemma \ref{PairDecomp}, there are $a_0,\ldots a_m \in M\cup\{\pm \infty\}$ such that for each $i \in \{0,\ldots, m\}$, $-\infty\le a_0 \le \ldots \le a_m \le \infty$ and $S \cap [a_{i-1},\ldots a_i]$ is either small or co-small. Now since $\mu$ concentrates on $(0,t)$, by localizing to $(b+P(N)) \cap (0,t)$ and $S \cap (0,t)$, we may assume that $0\le a_0 \le \ldots \le a_m=t$. Suppose that for some $i \in \{1,\ldots m\}$ that $S \cap [a_{i-1},a_i]$ is small. By density of $P(N)$, $(b+P(N)) \cap (S \cap [a_{i-1},a_i]) \neq \emptyset$. As $S \cap [a_{i-1},a_i]$ is small and intersects $b+P(N)$, this contradicts the choice of $b$. Thus for each $i \in \{1,\ldots m\}$, $S \cap [a_{i-1},a_i]$ is co-small. Therefore $\mu(S)=1$.

We have now shown that if $S$ is an $M$-definable set, then $\mu(S)=0$ if $S \subseteq b+P(N)$ and $\mu(S)=1$ if $b+P(N) \subseteq S$. Therefore by Theorem \ref{MeasureExt}, for any $r \in (0,1)$ there is $\nu$, an extension of $\mu$ to $N$, such that $\nu(b+P(N))=r$. Thus $\mu$ is non-smooth.

\end{proof}

\noindent This gives another proof of Theorem 5.2 from \cite{DNDP}. We consider this proof ``geometric'' in that it relies only on the decomposition theorems for definable sets and properties of smallness in the expansions studied in \cite{PairStructure}.

\section{Structural Lemmas}\label{Structural Lemmas}

We shall now establish our context for the remaining sections, which focus on dense pairs of ordered vector spaces. Fix an ordered field $F$. Then let $T_{OVS}$ be the theory of ordered $F$-vector spaces in language $\Cal L = \{+,<,0,1,(x \mapsto \alpha x) \}_{\alpha \in F}$. The theory $T_{OVS}$ then states that $1$ is a distinguished positive element, $+$ and $<$ are addition and the ordering, and for each $\alpha \in F$, the symbol $\alpha$ is interpreted as scalar multiplication by that constant. \newline

\noindent Let $\Cal M = (M;+,<,0,1,(x\mapsto \alpha x)_{\alpha \in F}) \models T_{OVS}$. Then consider a proper dense $\Cal L$-substructure $\Cal Q$ of $\Cal M$. As $T_{OVS}$ admits elimination of quantifiers, this substructure is elementary. Therefore by \cite{DensePairs}, the $\Cal L \cup \{Q\}$-structure on $(\Cal M, \Cal Q)$ that interprets $Q$ as substructure membership determines a complete theory independent of the choice of $\Cal Q$. \newline

\noindent We do not work precisely in that structure on $(\Cal M, \Cal Q)$, but rather in a substructure of $(\Cal M, \Cal Q)^{\eq}$. In particular, we work in the two sorts of $M$ and $M/Q$, the sort of the $F$-vector space quotient of $M$ by $Q$. Let $\Cal L^\star = \Cal L \cup \{Q,\pi,+_Q,(x \mapsto\alpha_Q x)_{\alpha_Q \in F}\}$, where $\pi:M \to M/Q$ is the $F$-linear quotient map, $+_Q$ is the induced addition on $M/Q$, and each $\alpha_Q$ is multiplication by $\alpha_Q$ on $M/Q$. Then let $T_{POVS}$ be the theory of the $\Cal L^\star$-structure $(M,M/Q;+,<,0,1,(x \mapsto\alpha x)_{\alpha\in F},Q,\pi,+_Q,(x \mapsto\alpha_Q x)_{\alpha_Q \in F})$. Throughout we shall refer to the first sort as the home sort, and the second sort as the quotient sort. This raises no confusion for models of $T_{POVS}$, however for an arbitrary $\Cal L^\star$-structure $\Cal S=(S,S/Q;\ldots)$, it need not be the case that $S/Q$ is the quotient of $S$ by the substructure picked out by $Q$. \newline 

\noindent Notationally, for this section onward, we make a distinction between $x \in Q$ and $x \in Q(M)$. The first is to be taken as new notation for the $\Cal L^\star$-formula $Q(x)$, while the second states that $M \models Q(x)$. This second notation is thoroughly used when considering elementary extensions of models of $T_{POVS}$ later. 

\begin{thm}\label{QE}
The theory $T_{POVS}$ admits elimination of quantifiers in language $\Cal L^\star$. 
\end{thm}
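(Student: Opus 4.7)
The plan is to establish quantifier elimination via the standard substructure-completeness criterion carried out as a back-and-forth argument: given $\kappa$-saturated models $\Cal M_1, \Cal M_2 \models T_{POVS}$ and an $\Cal L^\star$-isomorphism $j : \Cal A_1 \to \Cal A_2$ between small $\Cal L^\star$-substructures $\Cal A_i = (A_i, A_{Q,i}) \subseteq \Cal M_i$, I would show that $j$ can be extended one element at a time in either sort and in either direction. The ambient framework is van den Dries's back-and-forth system $\Gamma$ from \cite{DensePairs}, which already handles the $\Cal L \cup \{Q\}$-reduct on the home sort; the novelty is augmenting it to track the quotient sort $M/Q$ and the projection $\pi$.

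I would first normalize by iteratively arranging $A_{Q,i} = \pi(A_i)$: if $c \in A_{Q,1} \setminus \pi(A_1)$, pick any preimage $a \in \pi^{-1}(c) \subseteq M_1$ and, by density of $Q(M_2)$ and saturation of $\Cal M_2$, find $a' \in \pi^{-1}(j(c))$ realizing the appropriate $\Cal L$-type over $A_2$, then extend by $a \mapsto a'$. Once $A_{Q,i} = \pi(A_i)$, the $\Cal L^\star$-substructure is determined by its home-sort part together with $Q$, and the home-sort extensions split into three cases. For $q \in Q(M_1) \setminus A_1$, $\Gamma$ directly produces a matching $q' \in Q(M_2)$, and the quotient-sort side is automatic since $\pi(q) = 0$. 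For $a \in M_1 \setminus A_1$ with $\pi(a) \in \pi(A_1)$, write $a = a_0 + q$ with $a_0 \in A_1$ and $q \in Q(M_1)$, and reduce to the previous case. The essential case is $a \in M_1 \setminus A_1$ with $\pi(a) \notin \pi(A_1)$: I would use saturation to produce $a' \in M_2$ realizing the $\Cal L$-type of $a$ over $A_1 \cup Q(M_1)$ (transported by $j$), and then adjust $a'$ by an element of $Q(M_2)$ to arrange $\pi(a') \notin A_{Q,2} = \pi(A_2)$; such an adjustment exists by density of $Q(M_2)$ and smallness of $A_{Q,2}$ relative to the saturation of $\Cal M_2$. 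Setting $j(a) = a'$ and $j(\pi(a)) = \pi(a')$ then produces a well-defined $\Cal L^\star$-extension. Quotient-sort extensions $c \in (M_1/Q) \setminus A_{Q,1}$ reduce symmetrically, by first lifting $c$ to any preimage and invoking the essential case.

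The main obstacle will be verifying that the extension in the essential case preserves the freeness condition of $\Gamma$, namely that the enlarged home-sort substructure remains $\Cal L$-definably free from $Q(M_i)$ over its $Q$-part at every stage; this is what guarantees that the back-and-forth can be iterated. Fortunately, the hypothesis $\pi(a) \notin \pi(A_1)$ is, via quantifier elimination for $T_{OVS}$ applied to $\Cal L$-terms in $a$ over $A_1 \cup Q(M_1)$, equivalent to $a$ being $\Cal L$-definably independent from $A_1 \cup Q(M_1)$, and by our choice of $a'$ in the $Q(M_2)$-generic coset position the symmetric property holds on the $\Cal M_2$ side, so freeness is preserved almost by construction. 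Chaining these extensions so that the cumulative map remains in $\Gamma$ then follows the same pattern as in \cite{DensePairs}.
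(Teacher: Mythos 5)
The step that does not work is your ``essential case'' ($a \in M_1\setminus A_1$ with $\pi(a)\notin\pi(A_1)$), which is the heart of your argument. First, ``the $\Cal L$-type of $a$ over $A_1\cup Q(M_1)$ transported by $j$'' is not well-defined: $j$ is only defined on $\Cal A_1$, so there is nothing to transport the $Q(M_1)$-parameters along. Second, and more seriously, the adjustment you propose is vacuous: for any $q\in Q(M_2)$ one has $\pi(a'+q)=\pi(a')$, so adjusting $a'$ by an element of $Q(M_2)$ can never move $\pi(a')$ out of $\pi(A_2)$. Density of $Q(M_2)$ does not help here either; density lets you realize a cut \emph{inside} one prescribed coset, whereas what you need is to \emph{avoid} the whole family of cosets $a_2+Q(M_2)$, $a_2\in A_2$. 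The correct move is to choose $a'$ from the start, by saturation, realizing the transported cut of $a$ over $j(A_1)$ together with all conditions $x\notin a_2+Q(M_2)$; consistency of this partial type needs the (true, but nowhere justified in your sketch) fact that a nonempty open interval cannot be covered by finitely many cosets of the proper dense subspace $Q(M_2)$ --- for instance because a proper dense subgroup contains no interval, combined with a pigeonhole argument on scalars from $F$. Without that, the key extension step has no justification as written.

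It is worth comparing with the paper's proof, which avoids this case entirely and therefore needs neither a genericity argument nor the freeness bookkeeping of $\Gamma$. There one first adds home-sort preimages only of quotient elements \emph{already present} in the substructure: the cut of $h$ over $S$ is realized inside the single prescribed coset $\pi^{-1}(\iota_2(\pi(h)))$, which is dense, so only density and saturation are used. One then extends the quotient sort all at once, as an embedding of pure $F$-vector spaces into the saturated target, adding no home-sort points; after that every remaining home-sort element has its $\pi$-image already in the image of the quotient-sort map, so the first step finishes the job. Your route (a back-and-forth in the style of van den Dries's system, handling new cosets by a generic choice) can be repaired along the lines indicated above, and in this vector-space setting the freeness condition you worry about is in fact automatic for arbitrary $\Cal L^\star$-substructures, since $\dcl_{\Cal L}$ is just affine span and substructures are affinely closed; but as stated the central extension step fails.
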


\begin{proof}

\noindent Let $\Cal M, \Cal N \models T_{POVS}$, where $\Cal N$ is $|M|^+$-saturated. Consider a substructure $\Cal S =(S,S/Q,\ldots) \subseteq \Cal M$ with $\iota=(\iota_1,\iota_2):(S,S/Q) \to (N,N/Q)$ an $\Cal L^\star$-embedding. To establish elimination of quantifiers, it suffices to show that there there is an $\Cal L^\star$-embedding $\iota'=(\iota_1',\iota_2'):(M,M/Q) \to (N,N/Q)$ extending $\iota$.  We now proceed with the following two cases.

\begin{case}\label{OneStep}
For any $h \in M \setminus S$ with $\pi(h) \in S/Q$, there is an extension $(\iota_1',\iota_2')$ of $(\iota_1,\iota_2)$ such that $h \in \text{dom}(\iota_1')$. 
\end{case}

\begin{proof}[Proof of Case 1]
Let $h \in M \setminus S$ be such that $\pi(h) \in S/Q$. Denote $\pi(h)$ by $c$. We seek to find an $\Cal L^\star$-embedding $\iota'$ extending $\iota$ such that $h \in \text{dom}(\iota_1')$. Let $p(x)$ be the partial type of $<$-formulas satisfied by $h$ with parameters from $S$. Then let $\iota(p)$ be the collection of all formulas $\varphi(x,\iota(c))$ where $\varphi(x,c) \in p(x)$. Now $\iota(p)$ is a partial type with parameters from $N$. Notice that $\pi^{-1}(\iota_2(c))$ is dense in $N$. Therefore $\iota(p)\cup \{\pi(x)=\iota_2(c)\}$ is finitely satisfiable, and thus by saturation of $\Cal N$ is realized by some $k \in N$. Let $\Cal S\langle h \rangle$ be the $\Cal L^\star$-substructure of $\Cal M$ generated by $\Cal S$ and $h$. Set $\iota_1':\Cal S\langle h\rangle \to N$ as the $F$-linear map determined by $\iota_1$ along with $\iota_1'(h)=k$. By choice of $k$, $<$ is preserved by $\iota_1$. As the predicate $Q$ is an $F$-linear subspace, $Q$ is preserved by $\iota_1'$ also. Now $\pi(\iota_1'(h)) = \iota_2(\pi(h))$ as $k$ was chosen within $\pi^{-1}(\iota_2(c))$. As no new members of $S/Q$ were added, both $+_Q$ and each $\alpha_Q$ are also preserved. Therefore $(\iota_1',\iota_2):(\Cal S\langle h \rangle,S/Q)\to (N,N/Q)$ is an $\Cal L^\star$-embedding. 

\end{proof}

\begin{case}\label{Quotient Embedding}
Suppose $\pi(S) = S/Q$. Then there is an extension $(\iota_1',\iota_2')$ of $(\iota_1,\iota_2)$ such that $\text{dom}(\iota_2')= M/Q$.
\end{case}

\begin{proof}[Proof of Case 2]

\noindent Notice that $\iota_2:S/Q\to N/Q$ is an $F$-linear map. Applying quantifier elimination for $F$-vector spaces (see \cite[p. 383]{Hodges}), we get $\iota_2':M/Q\to N/Q$ an $F$-linear embedding. Therefore as no new elements of the home sort have been added, $(\iota_1,\iota_2'):(S,M/Q)\to (N,N/Q)$ preserves all $\Cal L^\star$-symbols except possibly for $\pi$. As $\pi(S) = S/Q$, $\text{dom}(\iota_2)\setminus (S/Q)$ is disjoint from $\pi(S)$, so $\pi(\iota_1(v)) = \iota_2(\pi(v))$ for each $v \in S$. 

\end{proof}

Using Case \ref{OneStep} as necessary, we may assume that $\pi(S) = S/Q$. Then by Case \ref{Quotient Embedding}, we may assume $S/Q = M/Q$. Then by further use of Case \ref{OneStep}, we get the desired $\Cal L^\star$-embedding $(\iota_1',\iota_2'):\Cal M \to \Cal N$ extending $(\iota,\iota')$. 

\end{proof}

\begin{remark}

Notice that as the extension on the quotient sort relies only on the $F$-vector space structure, the quotient sort of any model of $T_{POVS}$ is a pure $F$-vector space. 

\end{remark}

\begin{defn}
Let $\Cal M \models T_{POVS}$, and let $S$ be a non-empty definable subset of $M$. We say $S \subseteq M$ is a \emph{near-interval} if there are $a,b \in M \cup \{-\infty, \infty\}$, a finite $C \subseteq M/Q$, and $j \in \{-1,1\}$, such that $S = (a,b)\cap \pi^{-1}((M/Q)\setminus C)$ if $j=-1$ and $S = (a,b) \cap \pi^{-1}(C)$ if $j=1$. When $j=-1$ we call the near-interval \emph{large}, and when $j=1$ we call it \emph{small}. Notationally, we write $(a,b) \cap \pi^{-1}(C^j)$ for a near-interval. 
\end{defn}

\begin{remark}
First we notice that the usage of large and small agrees with the notion of $\pi^{-1}(0)$-large and small. Notice that the complement of a near-interval is a finite union of near-intervals and points, and that the intersection of two near-intervals is itself a near-interval. Also, by taking $C = \{\pi(1)\}$, $a=-\infty$, and $b=\infty$ we see that $Q(M)$ is a small near-interval.
\end{remark}

\begin{lem}\label{Unary Decomposition}

Let $\Cal M \models T_{POVS}$, and $S \subseteq M$ be definable. Then $S$ is a disjoint union of a finite set and finitely many disjoint near-intervals. 

\end{lem}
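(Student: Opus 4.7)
The plan is to apply the quantifier elimination from Theorem \ref{QE} and then analyze the shape of quantifier-free $\Cal L^\star$-formulas in a single home-sort variable $x$ with parameters from $M \cup M/Q$.

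First I would argue that, modulo logical equivalence and collection of parameters, every atomic formula in $x$ reduces to one of three shapes: $\alpha x + c < 0$, $\alpha x + c = 0$ (with $\alpha \in F$, $c \in M$), or $\pi(x) = e$ for some $e \in M/Q$. The $\Cal L$-atomic formulas of the home sort give the first two shapes directly. For the predicate $Q$, note that $Q(\alpha x + c)$ is equivalent to $\pi(\alpha x + c) = 0$ in $M/Q$, which rearranges to $\pi(x) = -\alpha^{-1}\pi(c)$ when $\alpha \neq 0$ and is a fixed truth value when $\alpha = 0$. Any quotient-sort equality between terms in $x$ reduces by $F$-linearity of $\pi$ to an equality of the form $\gamma_Q \pi(x) + d = 0$, which rearranges similarly.

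Next I would gather the parameters appearing in a quantifier-free definition of $S$ into a finite set $F_p = \{c_1 < \cdots < c_m\} \subseteq M$ (the endpoints from the linear atomic formulas) and a finite set $F_q \subseteq M/Q$ (the cosets from the $\pi$-atomic formulas). Writing $I_0 = (-\infty, c_1)$, $I_j = (c_j, c_{j+1})$ for $1 \le j < m$, and $I_m = (c_m, \infty)$, these data induce a finite partition of $M$ into cells of three kinds: the singletons $\{c_i\}$; the small cells $I_j \cap \pi^{-1}(e)$ for $e \in F_q$; and the large cells $I_j \setminus \pi^{-1}(F_q)$. The key observation is that each atomic formula of the three shapes above takes a constant truth value on every cell: no cell crosses any $c_i$, and on each cell $\pi(x)$ is either pinned to a single element (on a singleton or small cell) or avoids every element of $F_q$ (on a large cell).

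By boolean closure, the quantifier-free formula defining $S$ itself takes a constant truth value on every cell, so $S$ is the disjoint union of some subfamily of cells. Discarding the empty ones, the chosen singleton cells contribute a finite set, each chosen small cell is a small near-interval with $C = \{e\}$, and each chosen large cell is a large near-interval with $C = F_q$; disjointness is built into the partition. I do not anticipate a major obstacle here, since the whole argument hinges on the fact that the cosets $\pi^{-1}(e)$ behave as orthogonal atoms relative to the linear order on $M$, which is exactly the structural content made transparent by Theorem \ref{QE}.
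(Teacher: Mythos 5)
Your proof is correct and follows essentially the same route as the paper: quantifier elimination via Theorem \ref{QE}, reduction of atomic formulas to linear (in)equalities $\alpha x + c \mathbin{\square} 0$ and coset conditions $\pi(x)=e$, and then a common refinement of $M$ into points, interval-intersect-coset pieces, and interval-minus-cosets pieces. The only difference is organizational: you build a single cell partition on which every atomic formula is constant and invoke boolean closure, whereas the paper passes through conjunctive normal form and interleaves the partitions of the conjuncts --- a minor streamlining rather than a different argument.
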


\begin{proof}

As $T_{POVS}$ admits quantifier elimination, we may restrict to sets defined by quantifier-free formulas. We shall call a formula basic if it is of the form $t(x)=0$, $t(x)>0$, $t(x) \in Q$, or $t(x) \not \in Q$, where $t(x)$ is some $\Cal L$-term. Now, any atomic formula with variables from the home sort is equivalent to a disjunction of basic formulas. Furthremore, this is also true of the negations of atomic formulas. Therefore, by passing to conjunctive normal form, any quantifier free formula $\varphi(x)$ may be written as $\bigwedge \psi_i(x)$ where each $\psi_i(x)$ is a disjunction of basic formulas. 

Note that a unary $\Cal L$-term with parameters from $M$ is of the form $t(x) = ax+b$ for some $a \in F$ and $b \in M$. Furthermore, as the reduct of $\Cal M$ to $\Cal L$ is o-minimal, formulas of the form $t(x) = 0$ and $t(x)>0$ define finite unions of points and intervals. Now consider a formula of the form $ax+b \in Q$. This is equivalent to $x \in (-1/a)b+Q$, which isolates a single coset of the predicate $Q(M)$. Similarly $ax+b \not \in Q$ isolates the complement of the coset $(-1/a)b+Q(M)$.  

Notice that we may write each $\psi_i(x)$ as $\varphi_1(x) \vee \varphi_2(x)$ where $\varphi_1(x)$ is an $\Cal L$-formula, and $\varphi_2(x)$ is a $(+,(x\mapsto \alpha x)_{\alpha \in F},Q)$-formula, but not an $\Cal L$-formula. The first defines a finite union of points and intervals in $M$. For the second, we consider a term $t(x)$ appearing in $\varphi_2(x)$. This term occurs either as $t(x) \in Q$ or $t(x) \not \in Q$. The first defines a single coset, the other the complement of that coset. The union of such sets will be either a finite union of cosets, the complement of a single coset, or all of $M$. Therefore $\varphi_2(x)$ isolates a set of that form, and $\varphi_1(x)\vee \varphi_2(x)$ is a finite union of points, intervals, and a single near-interval. Continuing thus, by enumerating these finitely many points and the endpoints of each interval, there is a partition $-\infty=a_0<\ldots<a_n=\infty$ such that for each $j \in \{0,\ldots, n-1\}$, $\psi_i((a_j,a_{j+1})$ is either empty or a near-interval. 

We now consider $\bigwedge \psi_i(x)$. For each $i$, let $-\infty=a_{i,0}<\ldots<a_{i,n_{i}}=\infty$ be a partition as in the previous paragraph. Interleaving each of these finitely many partitions gives a partition $-\infty = a_0<a_1<\ldots<a_N=\infty$ of $M$ such that for each $i$ and each $j \in \{0,\ldots, N-1\}$, $\psi_i((a_j,a_{j+1}))$ is either empty, or a near-interval. Thus on $S \cap (a_j,a_{j+1})$ is either empty or a finite intersection of near-intervals, which is itself a near-interval. The result now follows, as away from the finite set $\{a_1,\ldots, a_{N-1}\}$, $\bigwedge \psi_i(M)$ is a finite union of near-intervals. 

\end{proof}

\noindent As we now understand the structure of unary subsets of the home sort, we may make the following definition.

\begin{defn}

Let $\Cal M \models T_{POVS}$ and $S \subseteq M$ be definable. Then $S^{no}=\{x\in M: \exists \epsilon \in M_{>0} (x-\epsilon,x+\epsilon)\cap S \text{ is a near-interval}\}$. We then set the \emph{near-interior} of $S$ to be the intersection $S^{no}\cap S$, and the \emph{near-frontier} to be $S \setminus S^{no}$. 

\end{defn}

\begin{lem}\label{Near-Interior}

Let $\Cal M \models T_{POVS}$ and $S\subseteq M$ be definable, then both $S^{no}$ and the near-interior of $S$ are also definable.

\end{lem}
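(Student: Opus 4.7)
The plan is to reduce to the decomposition of Lemma~\ref{Unary Decomposition}, applied uniformly in parameters. By Theorem~\ref{QE}, we may write $S = \varphi(M,\bar a)$ for some quantifier-free $\Cal L^\star$-formula $\varphi(x,\bar y)$. First I would re-examine the proof of Lemma~\ref{Unary Decomposition} and verify that the enumeration of breakpoints and cosets of $Q(M)$ arising from the CNF expansion of $\varphi$ is controlled by the shape of $\varphi$ alone, not by $\bar a$. This yields $\Cal L^\star$-definable functions $a_0(\bar y)\le a_1(\bar y)\le\cdots\le a_N(\bar y)$ valued in $M\cup\{\pm\infty\}$, $\Cal L^\star$-definable finite sets $C_i(\bar y)\subseteq M/Q$ of uniformly bounded size, signs $j_i(\bar y)\in\{-1,1\}$, and an $\Cal L^\star$-definable finite set $F(\bar y)\subseteq M$ of bounded size, with
\[
\varphi(M,\bar b) \;=\; F(\bar b)\cup\bigcup_{i=1}^N \bigl((a_{i-1}(\bar b),a_i(\bar b))\cap \pi^{-1}(C_i(\bar b)^{j_i(\bar b)})\bigr)
\]
for every parameter tuple $\bar b$.

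Next, using this decomposition at $\bar b = \bar a$, I would analyse the condition ``$(x-\epsilon,x+\epsilon)\cap S$ is a near-interval for some $\epsilon>0$'' from the definition of $S^{no}$. If $a_{i-1}(\bar a) < x < a_i(\bar a)$ for some $i$, then for $\epsilon$ small enough one has $(x-\epsilon,x+\epsilon)\subseteq (a_{i-1}(\bar a),a_i(\bar a))$ and $(x-\epsilon,x+\epsilon)\cap F(\bar a)=\emptyset$, so
\[
(x-\epsilon,x+\epsilon)\cap S \;=\; (x-\epsilon,x+\epsilon)\cap \pi^{-1}(C_i(\bar a)^{j_i(\bar a)})
\]
is itself a near-interval and $x\in S^{no}$. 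At each of the finitely many breakpoints $a_i(\bar a)$ and each point of $F(\bar a)$, membership in $S^{no}$ reduces to the first-order condition that the pieces $N_i,N_{i+1}$ on either side have matching coset data ($C_i=C_{i+1}$ and $j_i=j_{i+1}$) and that the point in question itself satisfies the compatible $Q$-coset condition; this is directly expressible in $\Cal L^\star$ from the definable data.

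Putting these pieces together, $S^{no}$ is the union of the open intervals $(a_{i-1}(\bar a),a_i(\bar a))$ with those breakpoints and $F$-points satisfying the explicit matching condition, which is an $\Cal L^\star$-definable set. The near-interior $S\cap S^{no}$ is then definable as an intersection of two definable sets.

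The main obstacle will be the first step: verifying that Lemma~\ref{Unary Decomposition} can be made uniform in parameters, with the number of pieces $N$ and the bounds on $|C_i(\bar y)|$ and $|F(\bar y)|$ depending only on the formula $\varphi$. This is essentially bookkeeping through the CNF expansion in the proof of that lemma, but is the only nontrivial ingredient; once available, the description of $S^{no}$ in terms of the decomposition, and the reduction of the ``exists $\epsilon$'' quantifier to a quantifier-free condition over finite definable data, follow routinely.
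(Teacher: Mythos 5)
Your route is workable and rests on the same ingredient as the paper, namely Lemma~\ref{Unary Decomposition}, but it is considerably heavier than what the paper actually does. The paper never needs a parametrized decomposition: it simply observes that, since $(c,d)\cap S$ is always a finite union of points and near-intervals, and since every coset of $Q$ is dense, the statement ``$(c,d)\cap S$ is a near-interval'' is equivalent to the single first-order condition $(c,d)\cap S=(c,d)\cap\pi^{-1}\left(\pi\left((c,d)\cap S\right)\right)$; plugging in $(c,d)=(x-\epsilon,x+\epsilon)$ and quantifying over $\epsilon$ makes $S^{no}$ definable in one line. Two remarks on your version. First, the step you flag as the main obstacle --- uniformity of the decomposition in the parameters $\bar y$ --- is not needed: the lemma only asks for definability with parameters, so you may decompose the single set $S$ once and use its finitely many breakpoints, the elements of the finite sets $C_i\subseteq M/Q$, and the exceptional points as parameters in your defining formula. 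Second, your matching condition at breakpoints ($C_i=C_{i+1}$, $j_i=j_{i+1}$, plus a coset condition on the point itself) is not quite exhaustive: for instance a breakpoint $a_i\notin S$ with an empty piece on one side and a near-interval on the other does lie in $S^{no}$, because $(a_i-\epsilon,a_i+\epsilon)\cap S$ is then literally of the form $(a_i,a_i+\epsilon)\cap\pi^{-1}(C^{j})$. Such degenerate cases are still finitely many definable conditions on your data (and they do not affect the near-interior, since the points involved are outside $S$), so they can be patched; but they illustrate why the paper's coset-invariance criterion, which sidesteps all boundary bookkeeping, is the more economical argument.
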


\begin{proof}

It remains to show that the property of $(x-\epsilon,x+\epsilon)\cap S$ being a near-interval is definable. By Lemma \ref{Unary Decomposition}, for any $a,b \in M$, $(a,b) \cap S$ is a finite union of points and near-intervals. Therefore $(a,b) \cap S$ is a near-interval if and only if $(a,b) \cap S = (a,b) \cap \pi^{-1}\left(\pi\left((a,b) \cap S\right)\right)$. 

\end{proof}

\begin{remark}
Notice that $S^{no}$ is always open. Now an open set is a union of basic open sets, which are open intervals. By Theorem \ref{Unary Decomposition}, we notice then that $S^{no}$ is, in fact, a union of finitely many intervals.  Furthermore, the endpoints of those intervals give a definable partition of the near-interior of $S$ into near-intervals. 
\end{remark}

\section{Distal Expansion}\label{Distal Expansion}

Fix $\Cal M \models T_{POVS}$. We seek an expansion of $\Cal M$ whose theory is distal. In \cite{DNDP}, the non-distality was witnessed by an indiscernible sequence $(a_i)_{i \in I_1 + (c) + I_2}$, where $I_1$ and $I_2$ are infinite linear orders without endpoints, $(a_i)_{i \in I_1+I_2}$ was indiscernible over some parameter $b$, but the full sequence was not. This construction yielded that $(a_i)_{i \in I_1+I_2}$ avoided the coset $b+Q(M)$, but $a_c$ fell into said coset. \newline

\noindent To avoid such sequences, we add a predicate $\prec$ such that $(M/Q;+_Q,\prec,(x\mapsto \alpha_Q x))_{\alpha_Q \in F}$ is a densely ordered $F$-vector space. We shall consider $\Cal M$ as an $\Cal L^\star \cup \{\prec\}$-structure, as an $\Cal L$-structure. And consider $M/Q$ as an $\Cal L_q = \{+_Q,\prec,(x \mapsto \alpha_Q x)_{\alpha_Q \in F}\}$-structure. By $0_Q$, we mean the zero element in the quotient sort.

\begin{thm}

The $\Cal L^\star \cup \{\prec\}$-theory of $\Cal M$ admits elimination of quantifiers.

\end{thm}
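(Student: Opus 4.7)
The plan is to adapt the back-and-forth argument used in the proof of Theorem \ref{QE}, making only minimal changes to accommodate the new predicate $\prec$ on the quotient sort. I would take $\Cal M$ and an $|M|^+$-saturated elementary extension $\Cal N$ of the $\Cal L^\star \cup \{\prec\}$-theory, together with an $\Cal L^\star \cup \{\prec\}$-substructure $\Cal S = (S, S/Q, \ldots) \subseteq \Cal M$ and an $\Cal L^\star \cup \{\prec\}$-embedding $\iota = (\iota_1, \iota_2) : \Cal S \to \Cal N$. The goal is then to extend $\iota$ to an embedding of all of $\Cal M$ into $\Cal N$, and as in Theorem \ref{QE} this splits into two sub-cases.

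The first sub-case concerns extending by an element $h \in M \setminus S$ of the home sort with $\pi(h) \in S/Q$. Here I would repeat the argument from Case 1 of Theorem \ref{QE} verbatim: the $<$-type of $h$ over $S$, together with the formula $\pi(x) = \iota_2(\pi(h))$, is finitely satisfiable by density of $\pi^{-1}(\iota_2(\pi(h)))$, so it is realized by some $k \in N$ by saturation, and $\iota_1$ extends by sending $h \mapsto k$. Since this step introduces no new elements of the quotient sort, no new $\prec$-relations are created and the extension automatically preserves $\prec$.

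The substantive change happens in the second sub-case, where $\pi(S) = S/Q$ and one wants to extend $\iota_2$ to all of $M/Q$. Rather than invoking quantifier elimination for pure $F$-vector spaces as done in Case 2 of Theorem \ref{QE}, I would invoke quantifier elimination for densely ordered $F$-vector spaces, applied to the $\Cal L_q$-embedding $\iota_2$. This quantifier elimination is a routine adaptation of the classical back-and-forth for divisible densely ordered abelian groups: every $\Cal L_q$-term reduces to an $F$-linear combination of variables and parameters, and a single existential is eliminated against a conjunction of strict inequalities and (dis)equations by density. This yields an $\Cal L_q$-extension $\iota_2' : M/Q \to N/Q$ of $\iota_2$ that preserves $\prec$, and since no home sort elements are added at this step while $\pi(S) = S/Q$, compatibility with $\pi$ is automatic.

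Combining the two sub-cases then proceeds exactly as at the end of the proof of Theorem \ref{QE}: iterated use of Case 1 to arrange $\pi(S) = S/Q$, then Case 2 to extend the quotient-sort component to all of $M/Q$, and finally further applications of Case 1 to extend on the home sort. The main—and essentially only—point requiring attention beyond what appears in Theorem \ref{QE} is confirming quantifier elimination for densely ordered $F$-vector spaces in $\Cal L_q$; this is standard but deserves explicit mention, since it is what allows the back-and-forth to proceed once the quotient sort has been ordered.
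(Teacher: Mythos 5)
Your proposal is correct and follows exactly the route the paper takes: its proof is just the observation that the argument of Theorem \ref{QE} goes through verbatim once quantifier elimination for ordered $F$-vector spaces replaces that for pure $F$-vector spaces in Case \ref{Quotient Embedding}, which is precisely your adaptation. Your additional remarks (that Case \ref{OneStep} adds no quotient-sort elements so $\prec$ is automatically preserved, and that the ordered-vector-space quantifier elimination is the only new ingredient) are accurate elaborations of the same argument.
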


\begin{proof}

This is similar to Theorem \ref{QE}, except we use quantifier elimination for ordered $F$-vector spaces instead of just $F$-vector spaces in Case \ref{Quotient Embedding}.

\end{proof}

\begin{lem}\label{ExpTypeDecomp}

Let $p(x)$ be an $\Cal L^\star \cup \{\prec\}$-type over $M$. Then there is an $\Cal L$-type $p'(x)$, and an $\Cal L_q$-type $q(x)$ such that $p'(x)\cup q(\pi(x)) \proves p(x)$. Moreover $q(\pi(x))$ is over $M/Q$.  

\end{lem}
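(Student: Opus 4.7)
The plan is to take $p'$ and $q$ as the natural reducts of $p$ to the two sublanguages, and then use quantifier elimination for the $\mathcal{L}^\star \cup \{\prec\}$-theory of $\mathcal{M}$ to show these suffice to recover $p$. Explicitly, I would set $p'(x) = \{\varphi(x) \in \mathcal{L}(M) : p \vdash \varphi\}$ and $q(y) = \{\psi(y) \in \mathcal{L}_q(M/Q) : p \vdash \psi(\pi(x))\}$, where $y$ is a variable of the quotient sort. Both are consistent (witnessed by any realization of $p$), and by construction $p'$ is an $\mathcal{L}$-type over $M$ and $q$ is an $\mathcal{L}_q$-type over $M/Q$.

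The crux will be the following splitting of atomic formulas. I claim that every atomic $\mathcal{L}^\star \cup \{\prec\}$-formula in a home-sort tuple $x$ with parameters from $M$ is equivalent to one of two kinds: (A) a pure $\mathcal{L}$-atomic formula in $x$ over $M$ (coming from $t(x)=0$ or $t(x)<0$ for an $\mathcal{L}$-term $t$), or (B) an $\mathcal{L}_q$-atomic formula in $\pi(x)$ with parameters from $M/Q$. For (B), I would use that $Q(t(x))$ is equivalent to $\pi(t(x)) = 0_Q$ and that, by $F$-linearity of $\pi$, $\pi(t(x)) = \sum_i \alpha_{i,Q}\pi(x_i) + \pi(c)$ for any $\mathcal{L}$-term $t(x) = \sum_i \alpha_i x_i + c$; thus every atomic formula touching $Q$, $\pi$, $+_Q$, some $\alpha_Q$, or $\prec$ collapses into an $\mathcal{L}_q$-atomic statement in $\pi(x)$ over $M/Q$, leaving no residual home-sort term beyond $\pi(x)$.

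With this splitting in hand, I would deduce $p'(x) \cup q(\pi(x)) \vdash p(x)$ as follows. Fix $\varphi(x) \in p$; by quantifier elimination, $\varphi$ is equivalent to a quantifier-free formula, hence a Boolean combination of type (A) and type (B) atomic formulas (the latter evaluated at $\pi(x)$). Since $p$ is complete, it decides every such atomic constituent. Each decided type (A) constituent lies in $p'(x)$, and each decided type (B) constituent is of the form $\psi(\pi(x))$ for some $\psi(y) \in q$. The resulting finite conjunction of signed atomic formulas then lies in $p'(x) \cup q(\pi(x))$ and implies $\varphi$.

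I expect the main obstacle to be the verification of the atomic splitting in the second paragraph, in particular handling the new predicate $\prec$: one must check that no atomic formula can mix home-sort and quotient-sort structure beyond what is captured by $\pi(x)$, which in turn relies on the $F$-linearity of $\pi$ and the identity $Q(y) \leftrightarrow \pi(y) = 0_Q$. This is a refinement of the basic-formula analysis already carried out for Lemma \ref{Unary Decomposition}. Once it is in place, the remaining step is a purely propositional argument invoking completeness of $p$.
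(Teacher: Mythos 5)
Your proposal is correct and follows essentially the same route as the paper: apply quantifier elimination, split atomic formulas into pure $\Cal L$-atomics over $M$ and $\Cal L_q$-atomics in $\pi(x)$ over $M/Q$ using the $F$-linearity of $\pi$ and the equivalence $t(x)\in Q \leftrightarrow \pi(t(x))=0_Q$, and finish by completeness of $p$. Your explicit handling of negations and Boolean combinations just spells out what the paper's ``restrict to atomic formulas'' step leaves implicit.
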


\begin{proof}

By quantifier elimination, it is enough to consider quantifier-free formulas. As $p(x)$ is a complete type, we may further restrict to atomic formulas. Let $\varphi(x,b)=\varphi(x_1,\ldots,x_m,b_1,\ldots,b_n) \in p$ be an atomic formula.  Then we may assume it is of the form $t(x,b)=0$, $t(x,b)>0$, $t(x,b)\in Q$, $t(x,b)\succ 0$, or $t(x,b)=0_Q$, where $t(x,y)$ is an appropriately chosen $\Cal L_\star \cup \{\prec\}$-term. 

Let us first consider $t(x,b)=0$, $t(x,b)>0$, and $t(x,b) \in Q$. In these, each variable $x_i$ must vary over the home sort and $b\in M^n$. The first two then are $\Cal L$-formulas, while the third is equivalent to $\pi(t(x,b)) \in Q$. As $\pi$ is $F$-linear, we may find a $\Cal L_q$-term $t'$ such that $t'(\pi(x),\pi(b)) = 0_Q$ if and only if $\pi(t(x,b)) \in Q$, which gives the desired $\Cal L_q$-formula.

Now the cases of $t(x,b) \succ 0_Q$ and $t(x,b)=0_Q$ remain. Here $x$ need not necessarily vary over $M^m$, but any occurrence of a variable in the home sort must occur within an instance of $\pi$. Furthermore, each instance of $\pi$ in $t(x,b)$ is of the form $\pi(t'(x,b))$ for some $\Cal L$-term $t'(x,b)$. As such a term is an $F$-affine function, and $\pi$ is an $F$-linear homomorphism, we may distribute $\pi$ over $t'$. This then gives an $F$-affine function over $M/Q$. Replacing each such subterm with variables from the home sort, we may now assume that $x$ varies over $(M/Q)^m$ and $b \in (M/Q)^n$. Then $t(x,b) \succ 0_Q$ and $t(x,b) = 0_Q$ are $\Cal L_q$-formulas. 

\end{proof}

\begin{thm}
The $\Cal L^\star \cup \{\prec\}$-theory of $\Cal M$ is distal.
\end{thm}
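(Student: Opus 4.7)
The plan is to combine Lemma \ref{ExpTypeDecomp} with the fact that both the $\Cal L$-reduct on the home sort and the $\Cal L_q$-structure on the quotient sort are o-minimal, hence distal. Write each tuple $a_i = (a_i^h, a_i^q)$ of variable $x$ according to its home-sort and quotient-sort coordinates, and similarly $b = (b^h, b^q)$. By quantifier elimination in $\Cal L^\star \cup \{\prec\}$ and Lemma \ref{ExpTypeDecomp}, the $\Cal L^\star \cup \{\prec\}$-type of a tuple over a parameter is determined by an $\Cal L$-type on the home-sort coordinates together with an $\Cal L_q$-type on the quotient-sort coordinates (which also covers $\pi$-images of the home-sort coordinates). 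Consequently, $b$-indiscernibility of a sequence in the full language is equivalent to the conjunction of (i) $\Cal L$-indiscernibility of $(a_i^h)$ over $b^h$, and (ii) $\Cal L_q$-indiscernibility of $(\pi(a_i^h), a_i^q)$ over $(\pi(b^h), b^q)$.

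Both reducts are ordered $F$-vector spaces, hence o-minimal and therefore distal. So let $(a_i)_{i \in I_1 + (c) + I_2}$ be an $\Cal L^\star \cup \{\prec\}$-indiscernible sequence, and suppose that $(a_i)_{i \in I_1 + I_2}$ is $b$-indiscernible in the full language. By (i) and (ii), both projected sequences are indiscernible in their respective reducts on the full index set, and indiscernible over the corresponding projection of $b$ on $I_1 + I_2$. Applying distality in each reduct separately promotes this to indiscernibility over the projection of $b$ on all of $I_1 + (c) + I_2$. Reassembling via the decomposition lemma then gives $b$-indiscernibility of the full sequence in $\Cal L^\star \cup \{\prec\}$, which is precisely the desired conclusion, uniformly in the finite tuple of sorts $x$ (as in the formulation of distality given in Section \ref{Preliminaries}).

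The main subtlety is essentially notational: home-sort coordinates of $x$ feed into both reducts, directly to the $\Cal L$-reduct and via $\pi$ to the $\Cal L_q$-reduct, so the two reducts are not independent and one cannot treat them as a disjoint union of languages. But Lemma \ref{ExpTypeDecomp} is crafted precisely to record exactly this splitting in a way compatible with $\pi$, so once one verifies (as is standard) that indiscernibility in $\Cal L^\star \cup \{\prec\}$ passes to each reduct (including the $\Cal L_q$-sequence $(\pi(a_i^h), a_i^q)$, since $\pi$ is a symbol of $\Cal L^\star$), the argument reduces to two invocations of distality of o-minimal theories with nothing further required.
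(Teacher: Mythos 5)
Your proof is correct and follows essentially the same route as the paper: use the quantifier elimination and Lemma \ref{ExpTypeDecomp} to split indiscernibility over a parameter into $\Cal L$-indiscernibility of the home-sort data and $\Cal L_q$-indiscernibility of the quotient data (including $\pi$-images), then invoke distality of ordered $F$-vector spaces in each reduct and reassemble. The only difference is bookkeeping: you treat mixed finite tuples directly, whereas the paper first reduces to unary sequences in a single sort via the remark following the definition of distality.
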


\begin{proof}

Recall that we need only check sequences within a single sort. We first note that as the structure on $M/Q$ is that of an ordered $F$-vector space, any indiscernible sequence in $M/Q$ is distal. We now consider sequences in the home sort.

Let $I_1$ and $I_2$ be infinite linear orders without endpoints and $(c)$ a one element linear order disjoint from $I_1$ and $I_2$. Let $b \in M$. Suppose that $(a_i)_{i\in I_1+(c)+I_2}$ is an indiscernible sequence in $M$ (it suffices to check unary sequences), and $(a_i)_{i \in I_1 + I_2}$ is $b$-indiscernible. We then notice that the $\Cal L$-structure on $\Cal M$ is that of an ordered $F$-vector space, and thus is distal. Therefore $(a_i)_{i\in I_1+(c)+I_2}$ is $\Cal L$-$b$-indiscernible. Now, the $\Cal L_q$-structure on $M/Q$ is that of an ordered $F$-vector space, which is distal. Therefore the sequence $(\pi(a_i))_{i \in I_1+(c)+I_2}$ is $\pi(b)$-indiscernible. By Lemma \ref{ExpTypeDecomp}, this is enough to guarantee that the sequence $(a_i)_{i \in I_1+(c)+I_2}$ is $\Cal L^\star \cup \{\prec\}$-indiscernible over $b$. Therefore the $\Cal L^\star \cup \{\prec\}$-theory of $\Cal M$ is distal. 
\end{proof}

\section{Elimination of Imaginaries}\label{Elimination of Imaginaries}

 We now give a weak elimination of imaginaries for models of $T_{POVS}$. This result was originally sought to try to provide a negative answer to Question 2 from the introduction, but it is now provided to show the essential uniqueness of the approach in Section \ref{Distal Expansion}, as long as one restricts to adding structure to quotient sorts. 

\begin{defn}

We say $T$ \emph{weakly eliminates imaginaries} if for each $\emptyset$-definable equivalence relation $E(x,y)$ and each model $\Cal M$ of $T$, there is some $\Cal L$-formula $\varphi(x,z)$ such that for each $a\in M$ there is a finite set $X_a$ such that $E(a,M) = \varphi(M,b)$ for each $b \in X_a$, and for $c \not \in X_a$, $E(a,M) \neq \varphi(M,c)$. 

We further say that this elimination is \emph{uniform} if there is an $\Cal L$-formula $\psi(x,z)$ such that $X_a = \psi(a,M)$ for each $a$. 

\end{defn}

\begin{defn}\label{wcodedef}

In a model $\Cal M$ of $T$, we say a $b$-definable set $S = \varphi(M,b)$ is \emph{weakly coded} if there is a finite $Y_b \subseteq M$ and an $\Cal L$-formula $\psi(x,y)$ such that $\psi(M,c) = \varphi(M,b)$ if and only if $c \in Y_b$. We say this coding is uniform if there is an $\Cal L$-formula $\chi(y,z)$ such that for any $b' \in M$, the set $Y_{b'} = \chi(M,b')$ is finite and weakly codes $\varphi(M,b')$ via $\psi(x,y)$. 

\end{defn}

\begin{remark}

\noindent Notice that in both previous definitions, uniformity allows us to check in a single model, and not in an arbitrary model of $T$. This is of particular use in Lemma \ref{criterion}.

\end{remark}

\noindent We note the following equivalence. 

\begin{lem}

The theory $T$ (uniformly) weakly eliminates imaginaries if and only if for each $\Cal M \models T$, $\Cal M$ (uniformly) weakly codes all definable subsets.

\end{lem}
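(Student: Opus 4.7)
My plan is to prove the equivalence by translating between coding of equivalence classes (WEI) and coding of arbitrary definable sets (weak coding). The bridge is that every definable set $\varphi(M,b)$ induces a canonical $\emptyset$-definable equivalence relation $E_\varphi$ on its parameter sort, and conversely every $E$-class is itself a $b$-definable set $E(M,b)$.

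For the forward direction, fix $\Cal M \models T$ and $S = \varphi(M,b)$. Set $E_\varphi(y,y') := \forall x\,(\varphi(x,y) \leftrightarrow \varphi(x,y'))$, so that $y \mapsto \varphi(M,y)$ descends to a bijection on $E_\varphi$-classes. Apply (uniform) WEI to $E_\varphi$ to obtain an $\Cal L$-formula $\eta(y,z)$ with finite code sets $X_b$ satisfying $\eta(M,c) = E_\varphi(b,M)$ iff $c \in X_b$, and in the uniform case a formula $\rho(y,z)$ with $X_b = \rho(b,M)$. Define $\psi(x,z) := \theta(z) \wedge \exists y\,(\eta(y,z) \wedge \varphi(x,y))$, where $\theta(z)$ is the $\Cal L$-formula asserting that $\eta(M,z)$ is a nonempty $E_\varphi$-class (closed under, and only under, $E_\varphi$). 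For $c \in X_b$ the existential witnesses are precisely the members of $E_\varphi(b,M)$, yielding $\psi(M,c) = \varphi(M,b) = S$; for $c \notin X_b$, either $\theta(c)$ fails (so $\psi(M,c) = \emptyset$) or $\eta(M,c)$ is a distinct $E_\varphi$-class (so $\psi(M,c) \neq S$). Thus $Y_b = X_b$ witnesses weak coding of $S$, and $\chi(y,z) := \rho(z,y)$ witnesses uniformity. The degenerate case $S = \emptyset$ is handled by a separate ad hoc coding, since the empty set is itself $\emptyset$-definable.

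For the reverse direction, fix a $\emptyset$-definable equivalence relation $E$. In the uniform case, apply uniform weak coding to the family $E(x,y)$ to obtain $\psi(x,z)$ and $\chi(y,z)$ with $Y_b = \chi(M,b)$; then take $\varphi^{\mathrm{WEI}}(x,z) := \psi(x,z)$ and $\rho(y,z) := \chi(z,y)$ to obtain uniform WEI directly. In the non-uniform case, weak coding provides per-$b$ formulas $\psi_b$. For each pair $(\psi,n)$, the set
\[
A_{\psi,n} := \{b \in M : \psi \text{ weakly codes } E(M,b) \text{ via a nonempty set of at most } n \text{ codes}\}
\]
is $\emptyset$-definable, and $M = \bigcup_{\psi,n} A_{\psi,n}$. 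Passing to a sufficiently saturated model and invoking compactness yields finitely many $(\psi_i, n_i)$ covering $M$; since this finite cover is expressible by a first-order sentence, it transfers to every model of $T$. One then assembles $\psi_1, \ldots, \psi_k$ into a single $\Cal L$-formula $\varphi^{\mathrm{WEI}}$ via a selector encoded in the coding tuple, for instance by introducing an indicator coordinate $w$ alongside coordinates $z_1, \ldots, z_k$ and using the conjunct ``$w = z_i$ and $w \neq z_j$ for $j \neq i$'' to force exactly one $\psi_i$ to activate.

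The main obstacle is this final combination step in the non-uniform reverse direction: one must design a selector mechanism inside a parameter-free $\Cal L$-formula that forces exactly one of the $\psi_i$'s to be active while constraining the ``inactive'' coordinates so that the total code set $\{c : \varphi^{\mathrm{WEI}}(M,c) = E(M,b)\}$ remains finite. All other steps are routine verifications unpacking the definitions.
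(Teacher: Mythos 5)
Your skeleton coincides with the paper's own (two-sentence) proof: the forward direction via the $\emptyset$-definable relation $E_\varphi(y,y')$ given by $\varphi(M,y)=\varphi(M,y')$, and the reverse direction by weakly coding the family $E(M,a)$. Your forward direction is in fact more careful than the paper's one-liner, and the extra care is needed: without the guard $\theta(z)$ forcing $\eta(M,z)$ to be a genuine nonempty $E_\varphi$-class, the formula $\exists y\,(\eta(y,z)\wedge\varphi(x,y))$ could have infinitely many instances equal to $S$ (any $c$ with $\eta(M,c)$ a nonempty subset of the class of $b$ would do), so the code set need not be finite; your $\theta(z)$ fixes exactly this, and your uniform half of the reverse direction is the trivial translation the paper intends and is the only part it later uses.

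The one genuine gap is the step you flag yourself: assembling the finitely many pairs $(\psi_i,n_i)$ produced by your compactness argument into the single formula that the paper's definition of (non-uniform) weak elimination of imaginaries demands. The selector you sketch, ``$w=z_i$ and $w\neq z_j$ for $j\neq i$'', fails as written: in the active disjunct the inactive coordinates are constrained only by inequalities, so they range over an infinite set and the total code set of $E(M,a)$ is infinite. The repair needs two ingredients: (i) guard the $i$-th disjunct by the first-order condition that there are at least one and at most $n_i$ tuples $z'$ with $\psi_i(M,z')=\psi_i(M,z_i)$ --- this silences $\psi_i$ on classes where it has more than $n_i$ (possibly infinitely many) accidental codes, without altering the set defined when the disjunct is active; and (ii) pin the tag and all inactive coordinates to $\emptyset$-definable constants, which is precisely the two-constants device the paper invokes in Lemma~\ref{criterion} and which is available for $T_{POVS}$ via $0,1$ (your ad hoc coding of $S=\emptyset$ quietly uses a constant as well, and for a completely general $T$ without definable constants some such device is genuinely required). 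With these amendments your argument closes; note also that the paper itself skips this entire issue, since it only ever applies the uniform version of the lemma, where no merging of formulas arises.
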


\begin{proof}

\noindent The forward direction comes from setting $a E b$ iff $\varphi(M,a) = \varphi(M,b)$. The reverse comes from (uniformly) weakly coding the family $E(M,a)$. 

\end{proof}

\noindent We now establish a useful result to demonstrate the existence of weak codes. This lemma in proof and statement follows Lemma 14 of \cite{GenericEI}. 

\begin{lem}\label{criterion}
%Suppose the multi-sorted structure $M$ has two $\emptyset$-definable constants, each unary definable set $S$ can be uniformly weakly coded, and that if $s_1$ is a single sort and $s_2$ is a finite tuple of sorts, each relation $R \subset M_{s_1} \times M_{s_2}$ with finite fibers can be uniformly weakly coded, then $M$ uniformly weakly eliminates imaginaries.

Let $\Cal M$ be a $|T|^+$-saturated model of $T$. Assume:

\begin{enumerate}

\item $M$ has two $\emptyset$-definable constants.

\item For each sort $s$, each subset of $M_s$ is uniformly weakly coded

\item For each tuple $s=(s_1,\ldots,s_n)$ of sorts and definable $R \subseteq M_s$, if $R(a)$ is finite for each $a \in M_{s_1}$, then $R$ is uniformly weakly coded. 

\end{enumerate}

Then $\Cal M$ uniformly weakly codes all definable subsets. 

\end{lem}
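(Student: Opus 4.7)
The natural approach is induction on $n$, the length of the sort tuple $s = (s_1,\ldots,s_n)$, of a definable set we wish to code. The base case $n=1$ is precisely hypothesis~(2), so all the work is in the inductive step: given a formula $\varphi(x_1,\ldots,x_n,y)$, I would like to produce, uniformly in the parameter $b$, a formula $\Psi(x,w)$ and a uniformly definable finite set $Y_b$ of $w$-parameters with $\{w : \Psi(M,w)=\varphi(M,b)\}=Y_b$.

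First I would apply the inductive hypothesis to the formula $\varphi$ regarded as defining a family of subsets of $M_{(s_2,\ldots,s_n)}$ parameterized by $(a,b)\in M_{s_1}\times M_y$. This yields $\Cal L$-formulas $\psi_1(x_2,\ldots,x_n,z)$ and $\chi_1(z,x_1,y)$ such that $\chi_1(M,a,b)$ is finite and exactly codes the fiber $\{(c_2,\ldots,c_n):\varphi(a,c_2,\ldots,c_n,b)\}$ via $\psi_1$. Second, I would form the auxiliary set $R_b:=\{(a,c):\chi_1(c,a,b)\}\subseteq M_{s_1}\times M^k$; by the finiteness half of the previous step, each fiber of $R_b$ over $a$ is finite, so hypothesis~(3) applies and yields formulas $\alpha(x_1,z,w)$, $\beta(w,y)$ such that $\beta(M,b)$ is the finite set of $\alpha$-codes of $R_b$. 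Third, set
\[
\Psi(x_1,\ldots,x_n,w)\;:\equiv\;\exists z\,\bigl[\alpha(x_1,z,w)\wedge \psi_1(x_2,\ldots,x_n,z)\bigr];
\]
then for any $d\in\beta(M,b)$ a direct unwinding gives $\Psi(M,d)=\varphi(M,b)$. Thus $\beta(M,b)$ always contains a genuine $\Psi$-code of $\varphi(M,b)$.

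The main obstacle is the final requirement of Definition~\ref{wcodedef}: the code set $\{w:\Psi(M,w)=\varphi(M,b)\}$ must be \emph{exactly} a finite uniformly definable set, but a priori it could properly contain $\beta(M,b)$. To overcome this, I would refine $\Psi$ so that only ``canonical'' $w$'s qualify. The natural way is to replace $\Psi$ by
\[
\Psi^{\ast}(x,w)\;:\equiv\;\Psi(x,w)\wedge \eta(w),
\]
where $\eta(w)$ is an $\Cal L$-formula expressing that $w$ arises from an $\alpha$-code of some $R_{b'}$ of the required syntactic shape (a definable property, since $\alpha$ and $\beta$ are fixed), and using the two $\emptyset$-definable constants from hypothesis~(1) as markers to pin down a specific coordinate of $w$ (e.g.\ requiring one entry to equal the distinguished $0$). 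The constants break any residual ambiguity and allow us to cut $\beta(M,b)$ down to a uniformly definable finite subset $Y_b$ satisfying $\{w:\Psi^{\ast}(M,w)=\varphi(M,b)\}=Y_b$.

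Finally, I would verify uniformity in $b$: the definability of $\Psi^{\ast}$ and $\eta$ are parameter-free, the formula picking out $Y_b$ from $\beta(M,b)$ is obtained by intersection with a fixed formula, and $|T|^{+}$-saturation of $\Cal M$ guarantees that the finiteness of each such $Y_b$ is uniform in $b$ (a standard compactness argument shows that a definable family with all fibers finite has a uniform finite bound on fiber sizes under saturation). The hardest part, and the real reason the two constants appear as a hypothesis, is engineering the refinement $\eta(w)$ so that the code set is exactly finite rather than merely containing a finite witness set --- everything else is bookkeeping of the inductive step and a transparent use of hypothesis~(3).
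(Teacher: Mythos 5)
Your skeleton is the paper's: induct on the length of the sort tuple, code the fibers $S(a)$ over the first coordinate by the inductive hypothesis, observe that the resulting relation $R_b=\{(a,c):c \text{ codes } S(a)\}$ has finite fibers so that hypothesis (3) codes it by $\alpha,\beta$, and recover $S$ via $\exists z\,[\alpha(x_1,z,w)\wedge\psi_1(x_2,\ldots,x_n,z)]$. The genuine gap is at your first step: you assume that a single application of the inductive hypothesis produces one pair $(\psi_1,\chi_1)$ that works uniformly in $(a,b)$. What the inductive hypothesis gives is, for each fiber $S(a)$, \emph{some} coding formulas, and these (together with the sort of the code tuple) may vary with $a$. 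This is exactly where the paper spends its remaining hypotheses: $|T|^+$-saturation is used (via a compactness argument over the parameter $a$) to cut down to finitely many formula pairs $\psi_i,\varphi_j$, and the two $\emptyset$-definable constants are then used as tags to merge these finitely many pairs into a single $\psi(x,y,z)$ and $\varphi(y,z)$, after which hypothesis (3) can be applied to the single definable relation $\varphi(M,M)$. Your closing paragraph instead attributes the constants to an exactness refinement and the saturation to bounding fiber sizes, which misidentifies their role; without the merging step your $R_b$ is not given by one formula and hypothesis (3) has nothing to act on.

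On the exactness point you raise (Definition \ref{wcodedef} demands that the code set be \emph{exactly} the set of all $\Psi$-codes): the concern is legitimate, and the paper's own write-up is terse there, but your proposed fix is not an argument --- ``$\eta(w)$ of the required syntactic shape'' plus constants ``as markers to pin down a coordinate'' does not visibly cut $\{w:\Psi(M,w)=S\}$ down to a finite definable set. What does work is to exploit the exactness already built into the codings supplied by the inductive hypothesis and by (3): if $w$ is an $\alpha$-code of some $R_{b'}$ (say $\eta(w):=\exists y\,\beta(w,y)$) and $\Psi(M,w)=S$, then, because the fiber-code sets are exactly the sets of codes of the fibers, $R_{b'}$ is determined by the set $\varphi(M,b')$, which the composition forces to equal $S$; hence $R_{b'}=R_b$ and, by exactness of $\beta$, $w\in\beta(M,b)$. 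The $\emptyset$-definable constants contribute nothing to this; their purpose in Lemma \ref{criterion} is the formula-merging step you skipped.
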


\begin{proof}

Let $S \subseteq M_s$ be $b$-definable, where $s=(s_1,\ldots,s_n)$ is an $n$-tuple of sorts. We proceed by induction on $n$. The case $n=1$ is true by $(2)$. 
For each $a \in M_{s_1}$, the fiber $S(a)$ may be uniformly weakly coded by the induction hypothesis. 
Thus there is an $\Cal L_b$-formula $\psi_a(x,y,z)$, a finite tuple of sorts $s_a$, and a finite set $X_a \subseteq M_{s_a}$, such that $\psi_a(M,c,a)=S(a)$ if and only if $c \in X_a$. By the uniformity $X_a = \varphi_a(M,a)$ for some $\Cal L_b$-formula $\varphi_a(y,z)$. As $M$ is $|T|^+$ saturated, there are $\Cal L_b$ formulas $\psi_1(x,y,z),\ldots \psi_n(x,y,z)$ with $y$ varying over a single tuple of sorts, along with $\varphi_1(y,z),\ldots, \varphi_n(y,z)$ such that for each $a \in M$, there are $1\le i,j \le n$ such that $\psi_i(x,y,z)$ and $\varphi_j(y,z)$ witness the uniform weak coding of $S(a)$. 
Now using the two $\emptyset$-definable constants, we may reduce to the case of single formulas $\psi(x,y,z)$ and $\varphi(y,z)$. Now for each $a \in M_{s_1}$, $\varphi(M,a)=X_a$ is finite. By assumption $(3)$, the $b$-definable relation $\varphi(M,M)$ can be uniformly uniquely coded as witnessed by $\chi(y,z,w)$ and the finite $b$-definable set $Y$. Notice that $\chi(M,M,d) = \varphi(M,M)$ if and only if $d \in Y$. Then $(e,f) \in S$ if and only if $\psi(f,c,e)$ holds for some $c$ in $X_a$. Now $c \in X_a$ if and only if $(c,a) \in \varphi(M,M)$, which holds if and only if $\chi(c,a,d)$ holds for some $d \in Y$. Thus $S$ is uniformly weakly coded, as the set $Y$ uniquely determines $S$, and is $b$-definable.  

\end{proof}

\noindent This section uses the same conventions as Section \ref{Structural Lemmas}. We now show a uniform weak elimination of imaginaries for models of $T_{POVS}$. While pursuing the results in Section \ref{Distal Expansion}, we found it helpful to know what precisely the imaginary sorts are, even though it was unnecessary for the main result. This can be considered as showing that adding structure on the vector space quotient is essentially the only way to get a distal expansion of $T_{POVS}$ by expanding $T_{POVS}^{\eq}$. We proceed now to weakly eliminate imaginaries by establishing that the assumptions of Lemma \ref{criterion} are satisfied.

\begin{prop}\label{weakcodes}
Let $\Cal M \models T_{POVS}$, and $S\subseteq M$ be definable. Then $S \subseteq M$ is uniformly weakly coded.
\end{prop}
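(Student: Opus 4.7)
The plan is to apply Lemma \ref{Unary Decomposition} to decompose $S$ into canonical pieces and then code each piece. Writing $S = \{p_1, \ldots, p_n\} \sqcup I_1 \sqcup \cdots \sqcup I_k$ with each $I_j = (a_j, b_j) \cap \pi^{-1}(C_j^{\epsilon_j})$ a near-interval, I would code $S$ by a tuple listing (in some order) the isolated points $p_i$, the endpoints $(a_j, b_j)$, the signs $\epsilon_j \in \{-1, 1\}$ (represented via the $\emptyset$-definable constants $0, 1$, also used to flag any $\pm\infty$ endpoints), and an enumeration of each finite $C_j \subseteq M/Q$. A single formula $\psi(x, y)$ would then reconstruct $S$ from such a tuple by case analysis on the shape indicators.

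The key uniqueness supporting the weak code is that the data $(a_j, b_j, \epsilon_j, C_j)$ is determined by the near-interval $I_j$ itself. Indeed, since $Q(M)$ is a dense $F$-subspace, every coset of $Q$ is dense in $M$, so $I_j$ is dense in $(a_j, b_j)$ in both the small ($\epsilon_j = 1$) and the large ($\epsilon_j = -1$) cases; hence $a_j = \inf I_j$ and $b_j = \sup I_j$. The sign $\epsilon_j$ is then determined by whether $I_j$ is small or large inside $(a_j, b_j)$, and $C_j$ is recovered either as $\pi(I_j)$ (small case) or as $\pi((a_j, b_j) \setminus I_j)$ (large case). The set of isolated points is likewise recovered as the near-frontier of $S$ in the sense of Lemma \ref{Near-Interior}. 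Thus the only ambiguity in a code tuple is the choice of enumerations of the $p_i$, the $I_j$, and each $C_j$, yielding finitely many valid codes per set $S$.

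For uniformity in the defining parameter, one must produce a single formula $\chi(y, z)$ picking out the set of valid codes for every member of a $z$-varying family $\varphi(M, z)$. The shape data (values of $n$, $k$, $|C_j|$, $\epsilon_j$, and which endpoints are $\pm\infty$) depends on $z$, but only finitely many shapes can occur in a given family, either by tracking through the CNF-based proof of Lemma \ref{Unary Decomposition} or via a compactness argument in the spirit of the uniformity step in Lemma \ref{criterion}. These finitely many shape-specific codings can then be amalgamated into a single uniform pair $(\psi, \chi)$ by using $0$ and $1$ as shape indicators in an initial segment of the code tuple.

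The main obstacle I expect is the bookkeeping involved in this last step: cleanly accounting for the finitely many possible shapes, for endpoints in $M \cup \{\pm\infty\}$, and for coordinates living in two different sorts (home and quotient). There is no deep mathematical difficulty beyond the canonical decomposition given by Lemma \ref{Unary Decomposition} together with the density of $Q$-cosets, but the formula $\psi$ will be a somewhat cumbersome case split, and the definability of $\chi$ will need the uniqueness observations above to be phrased as $\Cal L^\star$-formulas in a uniform way.
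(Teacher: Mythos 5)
Your proposal matches the paper's own proof: it uses Lemma \ref{Unary Decomposition} to split $S$ into a finite near-frontier plus near-intervals, canonicalizes via the near-interior (Lemma \ref{Near-Interior}) and the density of $Q$-cosets, and codes $S$ by tuples consisting of the near-frontier, the interval endpoints, and enumerations of the finite sets $C_j$, with only finitely many codes per set. Your treatment of uniformity (finitely many shapes, flagged by the $\emptyset$-definable constants) is the same idea the paper leaves implicit, so this is essentially the same argument.
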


\begin{proof}

By Lemma \ref{Unary Decomposition}, we know that $S$ is a disjoint union of a finite set and finitely many near-intervals. We may assume that finitely many near-intervals form the near-interior of $S$, which is definable by Lemma \ref{Near-Interior}. Recall that then there are $n\ge 0$ and $-\infty\le a_0 < b_0 \le \ldots \le a_n<b_n\le \infty$, definable from $S$, such that the near-interior of $S$ is $\bigsqcup_{i=0}^{n}\left((a_i,b_i)\cap S\right)$. Therefore for each $i \in \{0,\ldots n\}$ there are $C_i \subseteq_{\text{fin}} M/Q$ and $j(i) \in \{-1,1\}$ such that $(a_i,b_i) \cap S = (a_i,b_i) \cap \pi^{-1}(C_i^{j(i)})$. 

Let $\{s_1,\ldots, s_m\}$ enumerate the near-frontier of $S$ in increasing order.  Let $C_i^{en}$ be the set of all enumerations of $C_i$. Then set $Y = \{(s_1,\ldots s_m,a_0,b_0,c_0,\ldots,a_n,b_n,c_n):c_i \in C_i^{en}\}$. It is clear that from $S$ we may determine the set $Y$. Any member of $Y$ then also determines $S$, as $S$ is the unique set whose near-frontier is $s_1<\ldots<s_n$, $S^{no} = \bigsqcup_{i=0}^n (a_i,b_i)$, and $\pi((a_i,b_i) \cap S) = C_i^{j(i)}$. 
\end{proof}

Semi-uniform weak elimination of imaginaries, a weaker notion than uniform weak elimination, holds in $F$-vector spaces, and thus in the sort $M/Q$ \cite[p. 161]{Hodges}. However, a gluing argument using the $\emptyset$-definable constants in the home sort allows us to make this elimination uniform. For more details, please see Lemma 4.4.5 of \cite{Hodges}.

\begin{prop}\label{induction step}
Let $\Cal M \models T_{POVS}$ be $|T|^+$-saturated, and let $R(x,y,z) \subseteq M \times M^{n_1}\times (M/Q)^{n_2}$ be $\emptyset$-definable. Let $a \in M$, and assume each fiber $R(a,y,z) \subseteq M^{n_1} \times (M/Q)^{n_2}$ is finite. Then $R$ is uniformly weakly coded. 
\end{prop}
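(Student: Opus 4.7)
The plan is to reduce the claim to Proposition \ref{weakcodes} by stratifying $R$ according to fiber size over the first coordinate, and then coding each stratum as a finite set of enumerations.

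First, I would apply $|T|^+$-saturation of $\Cal M$ together with compactness to the family of formulas $\{\exists a\,(|R(a,y,z)| \ge n)\}_{n\in\N}$. Since every individual fiber is finite by hypothesis, this family is inconsistent in $\Cal M$, producing a uniform bound $N \in \N$ with $|R(a,y,z)| \le N$ for all $a \in M$. The sets $A_k = \{a \in M : |R(a,y,z)| = k\}$ for $0 \le k \le N$ then form a $\emptyset$-definable partition of $M$ on which the fiber size of $R$ is constant.

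Second, I would code each piece $R_k = R \cap (A_k \times M^{n_1} \times (M/Q)^{n_2})$ by recording its fibers via enumerating tuples. Writing $V = M^{n_1} \times (M/Q)^{n_2}$, the order $<$ on $M$ yields a canonical lexicographic enumeration of the $M^{n_1}$-coordinates of each fiber $R(a) \subset V$, while the $(M/Q)^{n_2}$-coordinates within each lex-class form a finite tuple whose ordering can only be chosen freely. The resulting $\emptyset$-definable set $G_k \subseteq A_k \times V^k$ of enumerating tuples allows $R_k$ to be recovered by flattening the tuple-valued second coordinate, and since each $a \in A_k$ admits at most $k!$ enumerations, $G_k$ is in effect a finite union of graphs of definable functions from $A_k$ to $V$, which are coded by Proposition \ref{weakcodes} applied to $A_k$ together with the (semi-)uniform weak elimination of imaginaries available in the pure $F$-vector space sort $M/Q$ (Remark after Theorem \ref{QE}).

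Third, I would assemble the codes of the $R_k$ into a single uniform weak code of $R$ using the $\emptyset$-definable constants $0,1 \in M$ to glue the finitely many pieces, in the spirit of the gluing argument in the proof of Lemma \ref{criterion}. Uniformity of the resulting code follows from the $\emptyset$-definability of all constructions in Steps 1 and 2.

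The main obstacle is Step 2, specifically the enumeration in the $(M/Q)^{n_2}$-direction: since $T_{POVS}$ endows $M/Q$ with only pure $F$-vector space structure, no definable order is available on that sort, and no canonical lex-enumeration of a finite subset of $(M/Q)^{n_2}$ exists. I would resolve this by admitting all finitely many enumerations at once; the resulting finite collection of coding tuples is precisely the finite set $Y_b$ permitted by Definition \ref{wcodedef} of weak coding, which is exactly the flexibility distinguishing weak from strong coding.
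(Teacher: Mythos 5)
There is a genuine gap at the heart of your Step 2. After stratifying by fiber size and passing to enumerating tuples, you assert that $G_k$ is ``in effect a finite union of graphs of definable functions from $A_k$ to $V$, which are coded by Proposition \ref{weakcodes} \ldots together with the (semi-)uniform weak elimination of imaginaries in $M/Q$.'' But Proposition \ref{weakcodes} only weakly codes definable \emph{subsets of the home sort} $M$; it says nothing about graphs of definable functions, and coding those graphs is precisely the hard part of the proposition. Likewise, weak elimination of imaginaries for the pure $F$-vector space $M/Q$ concerns sets definable in that sort alone; the functions you need to code go from the home sort into $M^{n_1}\times (M/Q)^{n_2}$ and are definable only in the pair, so the vector-space EI does not apply to them. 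Your closing remark correctly identifies the absence of an order on $M/Q$ as an obstacle, but admitting all enumerations only fixes the bookkeeping of how a finite fiber is listed; it does not produce the finite set of parameters that determines the (typically infinite) relation $R$, which is what a weak code must be.

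The missing ingredients are structural, and they are what the paper's proof supplies. For the home-sort coordinates, one needs that a definable partial function $M\to M$ is, off a finite set, piecewise of the form $x\mapsto \alpha x+b$; this uses the no-dense-graph property of $T_{POVS}$ (via \cite{OpenCore} and the remark after Lemma 16 of \cite{GenericEI}), and only then do the pieces $U_{i,k}$ become unary sets to which Proposition \ref{weakcodes} applies (Lemmas \ref{functionreduction} and \ref{functioncode}). For the quotient-sort coordinates, one needs the quantifier-elimination analysis showing that a finite fiber $R(a,b,z)\subseteq (M/Q)^{n_2}$ is a finite union of $0$-dimensional affine sets cut out by conditions of the form $L(\pi(a,b))=z$ with $L$ an $F$-linear map; this converts the $M/Q$-valued data into finitely many maps $a\mapsto \pi(L_j(a))$ given by $\Cal L$-terms, i.e.\ into finitely many home-sort parameters plus a definable partition of $M^{1+n_1}$ recording which $L_j$ occur. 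Without these two steps your enumeration functions into $V$ are just abstract definable functions, and you have no mechanism for coding them; with them, your stratification-and-enumeration scheme essentially collapses into the paper's argument.
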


\noindent We proceed now with a series of lemmas. Recalling the remark after definition \ref{wcodedef}, we may assume that $\Cal M$ is $\aleph_1$-saturated.

\begin{lem}\label{functionreduction}

Let $\Cal M$ and $R$ be as in \ref{induction step}. Further assume that $n_2=0$, that is that $R(a,y)$ lives purely in the home sort for each $a \in M$. Suppose that for each definable partial function $f:M \to M$, that $\text{gr}(f)$ can be uniformly weakly coded. Then $R(x,y) \subseteq M \times M^{n_1}$ can be uniformly weakly coded. 

\end{lem}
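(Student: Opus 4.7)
The plan is to reduce the coding of $R$ to the coding of graphs of partial functions $M \to M$, by enumerating fibers of $R$ in lex order. First, I would extract a uniform bound on fiber sizes: since each $R(a, y)$ is finite and $\Cal M$ is $\aleph_1$-saturated, compactness applied to the partial type $\{\exists^{\ge n} y\, R(x, y) : n \in \N\}$ forces the existence of $N \in \N$ with $|R(a, y)| \le N$ for every $a \in M$.

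Using the definable order on $M$ and the induced lexicographic order on $M^{n_1}$, for each $j \in \{1, \ldots, N\}$ I would define the $\emptyset$-definable partial function $f_j : M \to M^{n_1}$ sending $a$ to the $j$-th element of $R(a, y)$ in lex order (left undefined when $|R(a,y)| < j$). Splitting each $f_j$ into its coordinates yields $\emptyset$-definable partial functions $f_{j,k} : M \to M$ for $j \le N$ and $k \le n_1$, and these jointly determine $R$ via the equivalence that $(a, y) \in R$ iff $y = (f_{j,1}(a), \ldots, f_{j,n_1}(a))$ for some $j$.

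Applying the hypothesis to each $f_{j,k}$ produces a uniform weak code for $gr(f_{j,k})$: a finite definable set $Y_{j,k}$ and an $\Cal L^\star$-formula $\psi_{j,k}(x, y, z)$ with $\psi_{j,k}(M, M, c) = gr(f_{j,k})$ iff $c \in Y_{j,k}$. From these I would assemble a combined formula $\Psi(y, z_0, z_{1,1}, \ldots, z_{N, n_1})$ expressing that $y = (c_1, \ldots, c_{n_1})$ for some $j$, where each $c_k$ is the unique element satisfying $\psi_{j,k}(z_0, c_k, z_{j,k})$. Setting $z_0 = a$ and letting $(z_{j,k})_{j,k}$ range over $\prod_{j,k} Y_{j,k}$, the finite set $\{a\} \times \prod_{j,k} Y_{j,k}$ is definable from $a$ and, via $\Psi$, uniformly weakly codes $R(a, y)$; uniformity in the parameter of $R$ is inherited from the uniformities supplied by the hypothesis.

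The main obstacle I anticipate is the bookkeeping required to assemble the weak codes and coding formulas $\psi_{j,k}$, which may live in different sort-tuples, into a single code and formula $\Psi$ for $R$. Saturation and compactness (as used in the proof of Lemma \ref{criterion}) allow reduction to a single formula, and the $\emptyset$-definable constants $0, 1$ available in $T_{POVS}$ supply the padding needed to normalize sort-tuples across the various $Y_{j,k}$.
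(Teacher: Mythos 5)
Your proposal follows the paper's proof essentially verbatim: bound the fiber size $N$ by saturation, define the partial functions $f_{j,k}$ picking out the $k$-th coordinate of the $j$-th lexicographically ordered element of $R(a,y)$, code each graph by the hypothesis, and note that these functions (with their codes) uniquely determine $R$. The only wrinkle is your closing phrasing, which codes the fiber $R(a,y)$ with $a$ included in the code rather than the relation $R$ itself; leaving $z_0$ as a free variable of $\Psi$ and taking the tuples from $\prod_{j,k} Y_{j,k}$ as the code repairs this, and the paper leaves this assembly step implicit in any case.
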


\begin{proof}

By the saturation of $\Cal M$, we may take $N \in \N$ such that $|R(a)|\le N$ for each $a \in M$. Furthermore, assume that $|R(a,y)|=N$ for some $a \in M$. For $i \in \{1,\ldots N\}$ and $j \in \{1,\ldots n_1\}$, define the partial function $f_{i,j}(a)$ to be the $j$-th entry in the $i$-th element of $R(a,y)$ under the lexicographic order, when this exists. By assumption, each $f_{i,j}$ can be uniformly weakly coded. However, the collection of the $f_{i,j}$ is uniquely determine the relation $R(x,y)$, as $R(x,y)$ is the unique subset of $M \times M^{n_1}$ such that the $j$-the element of the $i$-th lexicographically ordered element of the fiber above $a$ is given by $f_{i,j}(a)$. 

\end{proof}

\noindent We now show that each partial function from $M$ to $M$ can be uniformly weakly coded. 

\begin{lem}\label{functioncode}

Let $S\subseteq M$ and $f:S \to M$ be definable. Then $\text{gr}(f)$ can be uniformly weakly coded. 

\end{lem}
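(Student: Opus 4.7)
The plan is to show that any $\Cal L^\star$-definable partial function $f : S \to M$ is piecewise $F$-affine on a definable partition of $S$ into a finite set and finitely many near-intervals, and then to uniformly weakly code $\text{gr}(f)$ by assembling the coded partition (via Proposition \ref{weakcodes}) with the $F$-affine parameters on each piece.

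First, by Theorem \ref{QE} and disjunctive normal form, write the defining formula of $\text{gr}(f)$ as a finite disjunction $\bigvee_i \psi_i(x,y,b)$ of conjunctions of basic atomic formulas in the home-sort pair $(x,y)$: $\Cal L$-equations $\alpha x + \beta y = c$, strict inequalities $\alpha x + \beta y > c$, predicate conditions $\alpha x + \beta y + c \in Q$ or their negations, and formulas on $\pi(x), \pi(y)$ pulled back from the quotient sort. Let $G_i$ denote the cell defined by $\psi_i$. Because $\text{gr}(f)$ is a functional graph, each vertical fiber $G_i(a)$ is at most a singleton. An inequality defines an open half-plane; a predicate condition defines a union of lines parallel to a fixed direction, whose intersection with a vertical line is an infinite coset of $Q(M)$ or its complement; a quotient-sort formula in $\pi(y)$ fixes $y$ only modulo $Q$. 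No combination of these non-equational forms can cut out a singleton, so either $\psi_i$ contains two independent $\Cal L$-equations (making $G_i$ a single point) or $\psi_i$ contains an equation $\alpha x + \beta y = c$ with $\beta \neq 0$, in which case $f$ agrees on $\pi_1(G_i) \cap S$ with the $F$-affine map $y = -(\alpha/\beta)x + c/\beta$.

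Second, substitute each such equation into the remaining conjuncts of $\psi_i$ to obtain a definable set $S_i \subseteq M$ on which $f$ is $F$-affine; apply Lemma \ref{Unary Decomposition} to each $S_i$, take the common refinement, and separate the finitely many points where distinct affine pieces agree. This yields a definable partition $S = F_0 \sqcup I_1 \sqcup \ldots \sqcup I_k$ where $F_0$ is finite and each $I_j$ is a near-interval on which $f(x) = \alpha_j x + c_j$ for some slope $\alpha_j \in F$ drawn from a fixed finite set determined by the formula $\varphi$ defining $\text{gr}(f)$, and intercept $c_j \in M$. Weakly code $\text{gr}(f)$ by the tuple $(F_0, f|_{F_0}, (I_j, c_j)_{j=1}^k)$: the finite set $F_0$ and each near-interval $I_j$ are uniformly weakly coded by Proposition \ref{weakcodes}, the values of $f$ on $F_0$ and the intercepts $c_j$ are tuples in $M$, and the slopes $\alpha_j$ need not be encoded since they are fixed by $\varphi$ as $i$ ranges over the DNF disjuncts. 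This data uniquely recovers $\text{gr}(f)$ up to finitely many relabelings of the pieces, and uniformity in $b$ is immediate because the whole construction proceeds from the fixed formula $\varphi$.

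The main obstacle is the structural claim in the first paragraph: rigorously checking that no Boolean combination of non-equational atomic $\Cal L^\star$-formulas (in particular, predicate conditions on cosets of $Q(M)$ and quotient-sort formulas on $\pi(y)$) can pin $y$ to a single value for fixed $x$, so that an explicit $\Cal L$-equation with nonzero $y$-coefficient must appear in each functional cell. Once this is in hand, the remaining steps are a straightforward assembly of Lemma \ref{Unary Decomposition} and Proposition \ref{weakcodes}.
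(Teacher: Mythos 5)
Your proposal is correct in substance, but it reaches the key structural fact by a genuinely different route than the paper. The paper does not use quantifier elimination at this point: it invokes the fact that $T_{POVS}$ has no dense graphs (5.9 of \cite{OpenCore}) together with the remark after Lemma 16 of \cite{GenericEI} to conclude that $\overline{\text{gr}(f)}$ is contained in finitely many $\Cal L$-definable (hence affine) functions $x\mapsto \alpha_i x+b_j$, and then, since this only says that at each point $f$ takes one of finitely many affine values, it introduces the ``$\alpha_i$-point'' device (points where $f(y)-\alpha_i y$ is locally constant) to carve $S$ into a finite set plus definable near-intervals $U_{i,k}$ on which a single affine map is used, finishing with Proposition \ref{weakcodes}. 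You instead extract piecewise affineness directly from Theorem \ref{QE}: in a DNF cell contained in $\text{gr}(f)$, any conjunction of inequalities, coset conditions $t(x,y)\in Q$ or their negations, and quotient-sort equalities (which, since $\pi$ is surjective and $Q$ is an $F$-subspace, are again coset conditions) has vertical fibers that are empty or infinite -- cosets of $Q$ are dense, finitely many cosets of the same subspace intersect in a coset or in nothing, and any nonempty interval meets infinitely many cosets -- so a functional cell must contain an equation with nonzero $y$-coefficient, giving an affine map together with its quantifier-free definable domain. This makes the $\alpha_i$-point machinery unnecessary, since each cell already comes with its own affine formula; your refinement step then just needs the observation (from the remark after the definition of near-intervals) that the Boolean algebra generated by finitely many near-intervals consists of finite unions of near-intervals and points. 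The trade-off: your argument is more elementary and self-contained but tied to the affine terms and explicit QE of $T_{POVS}$, whereas the paper's citation-based argument is shorter on the page and of a form that transfers to pairs where one has a structure theorem for graphs but no explicit quantifier elimination. Do note that the step you flag as the main obstacle is exactly the coset-density argument above and should be written out, and that uniformity in the parameter deserves the same (brief) attention it gets in the paper, via the uniformity of Proposition \ref{weakcodes} and the finitely many shapes of the DNF data.
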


\begin{proof}

In the case that $S$ is finite, then $\text{gr}(f)$ is finite, and can clearly be uniformly weakly coded. Therefore we may assume that $S$ contains a near-interval. 

Now, $T_{POVS}$ has no dense graphs by 5.9 in \cite{OpenCore}. Therefore we may use the remark following Lemma 16 in \cite{GenericEI}, which yields that $\overline{\text{gr}(f)}$ is a finite union of $\Cal L$-definable functions. Each $\Cal L$-definable unary function is of the form $x\mapsto \alpha x + b$ for some $\alpha \in F$ and $b \in M$. Therefore there are $\alpha_1<\ldots<\alpha_n \in F$ and $b_1,\ldots b_m \in M$ such that for each $x \in S$, $f(x)=\alpha_ix+b_j$ for some $i \in \{1,\ldots n\}$ and $j \in \{1,\ldots m\}$. 

Let $i \in \{1,\ldots n\}$. For $x$ in the near-interior of $S$, we say $x$ is an $\alpha_i$-point if there is $\epsilon>0$ such that for all $y \in (x-\epsilon,x+\epsilon) \cap S$, $f(y)-\alpha_iy = f(x)-\alpha_ix$. Let $S_i$ be the collection of $\alpha_i$-points. Each non-empty $S_i$ is a finite union of near-intervals on which $f(x)-\alpha_ix$ is constant $b_j$ for some $j \in \{1,\ldots m\}$. Enumerate these distinct constants as $c_{i,1}<\ldots< c_{i,\ell_i}$. Set $U_{i,k}$ be the subset of $S_i$ where $f(x)-\alpha_ix = c_{i,k}$. Notice that $U_{i,k}\cap U_{i',k'} \neq \emptyset$ only if $i=i'$ and $k=k'$.

We now claim that $S \setminus \bigcup_{i=1}^n S_i$ is finite. If not, there are $a,b \in M$ such that $(a,b) \cap S$ is a near-interval. Therefore there are $i,j$, such that $\{x \in (a,b)\cap S:f(x)=\alpha_ix+b_j\}$ is infinite. This then contains a near-interval, which is necessarily a subset of $S_i$, contradicting the choice of $a$ and $b$.

For ease of notation, let us assume each $S_i$ is non-empty. In the case that one is empty, we must be careful not to include it in the following. Now each $U_{i,k}$ can be uniformly weakly coded by Proposition \ref{weakcodes}. Notice that $f$ is uniquely determined by its values on the finite set $S \setminus \bigcup_{i=1}^n S_i$ along with the fact that $f(x) = \alpha_ix+c_{i,k}$ on each $U_{i,k}$. Thus $f$ can be uniformly weakly coded. 

\end{proof}

\noindent Now we conclude with proving Proposition \ref{induction step}.

\begin{proof}

In the previous lemmas, we uniformly weakly coded the projection of $R(x,y,z)$ onto $M \times M^{n_1}$. Now for each $a \in M$ and $b=(b_1,\ldots b_{n_1})\in M^{n_1}$, we know that $R(a,y,z) \subseteq M^{n_1} \times (M/Q)^{n_2}$ is finite. Thus $R(a,b_0,\ldots,b,z)$ is a finite $a$-definable set. We may assume that $R(x,y,z)$ is quantifier-free definable. Furthermore, we may arrange the formula defining $R(x,y,z)$ to be in disjunctive normal form. Write this formula as $\varphi(x,y,z) = \bigvee \varphi_i(x,y,z)$, where $(x,y)$ varies over $M^{1+n_1}$, $z=(z_1,\ldots,z_{n_2})$ varies over $(M/Q)^{n_2}$, and each $\varphi_i(x,y,z)$ is a conjunction of atomic formulas. As $R(a,b)$ is finite for each $(a,b) \in M^{1+n_1}$, we may further assume that $\varphi_i(a,b, M) \cap \varphi_j(a,b, M) = \emptyset$ for each $i \neq j$. 

Now $R(a,y,z)$ is finite for each $a \in M$. As $M/Q$ carries the structure of a pure $F$-vector space, for each $b \in M^{n_1}$, $R(a,b,z)$ is a finite union of affine $F$-subspaces. Furthermore, each such affine $F$-subspace is $0$-dimensional. Therefore, if $\varphi_i(a,b,z)$ is nonempty, it is a singleton. Considering the possible quantifier-free $\Cal L^\star$-formulas, we then see that $\varphi_i(a,b, z)$ can be written as $L(\pi(a,b))=z$ for some $F$-linear function $L$. Thus for each $a \in M$ there are $F$-linear functions $L_{1,a},\ldots,L_{m_a,a}$ such that $R(a,b,z) = \bigcup L_{i,a}(b)$. As $\varphi(x,y,z)$ is of finite length, the collection of all the $L_{i,a}$ is finite. Thus there are $L_1,\ldots, L_m$ such that for each $(a,b) \in M^{1+n_1}$ and $c \in R(a,b,z)$, there is $i \in \{1,\ldots m\}$ such that $c = \pi(L_i(a,b))$. Let $i \in \{1,\ldots,n_1\}$. Outside a finite subset of $M^{1+n_1}$, the proofs of Lemmas \ref{functionreduction} and \ref{functioncode} show that each $b_\ell$ is a linear function of $a_0$. Thus, by substituting in these linear functions, we may assume each $L_j$ is a unary function. That is, that for each $(a,b)=(a,b_0\ldots,b_{n_1}) \in M^{1+n_1}$ and $c \in R(a,b,z)$, that there is $j \in \{1,\ldots m\}$ such that $c = \pi(L_j(a))$. 

We now weakly code the relation $R(x,y,z)$. For each $S \subseteq\{1,\ldots m\}$, we may determine whether $R(a,b,z) = \{\pi(L_j(a_0)):j \in S\}$. This gives a definable partition of $M^{1+n_1}$. Each piece of this partition can be uniformly weakly coded by Lemmas \ref{functionreduction} and \ref{functioncode}. Furthermore, the collections of functions used on each partition can be uniformly weakly coded by adapting Lemma \ref{functioncode}. This then gives a uniform weak coding of the relation $R(x,y,z)$. 

\end{proof}

\noindent We have now established the assumptions of Lemma \ref{criterion} except in the case where the sort $s_1$ in $(3)$ is the quotient sort. Given $R \subseteq M/Q \times M^{n_1} \times (M/Q)^{n_2}$ with $R(a)$ finite for each $a \in M/Q$, define $\hat{R}\subseteq M \times M^{n_1}\times (M/Q)^{n_2}$ such that $\hat{R}(a) = R(\pi(a))$. Notice $\hat{R}(a)$ is finite for each $a \in M$, and thus can be uniformly weakly coded. Notice then that $\hat{R}$ uniquely determines $R$. Therefore the assumptions of Lemma \ref{criterion} are met. 

\begin{cor}

Let $\Cal M \models T_{POVS}$. Every definable $S \subseteq M$ can be uniformly weakly coded. Therefore $T_{POVS}$ uniformly weakly eliminates imaginaries.  

\end{cor}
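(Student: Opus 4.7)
The plan is to deduce the corollary directly from Lemma \ref{criterion}, since the preceding propositions have been set up precisely to verify its three hypotheses. By the equivalence already recorded (the lemma immediately after Definition \ref{wcodedef}), uniform weak elimination of imaginaries for $T_{POVS}$ is equivalent to the statement that every model $\Cal M \models T_{POVS}$ uniformly weakly codes all of its definable subsets, and the uniformity observation in the remark after Definition \ref{wcodedef} allows one to verify this in a single sufficiently saturated model. So I would fix $\Cal M \models T_{POVS}$ that is $|T_{POVS}|^+$-saturated and check assumptions (1)--(3) of Lemma \ref{criterion}.

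For (1), the home sort carries the $\emptyset$-definable constants $0$ and $1$ from the language $\Cal L$, which serves as the pair of distinct $\emptyset$-definable points needed to glue codings. For (2), I would split by sort: when the sort is the home sort, Proposition \ref{weakcodes} directly gives uniform weak coding of an arbitrary definable unary set $S \subseteq M$ via the enumeration of the near-frontier together with the endpoints and finite quotient data $C_i$ of its near-intervals; when the sort is the quotient $M/Q$, I would invoke the semi-uniform weak elimination of imaginaries for pure $F$-vector spaces (as cited from Hodges) and upgrade it to uniform weak coding by the standard gluing argument using the two $\emptyset$-definable constants of the home sort, as in Lemma 4.4.5 of \cite{Hodges}.

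For (3), I would treat the two cases of the leading sort $s_1$ separately. If $s_1$ is the home sort, Proposition \ref{induction step} already covers arbitrary definable $R \subseteq M \times M^{n_1} \times (M/Q)^{n_2}$ with finite fibers above the first coordinate. If instead $s_1$ is the quotient sort, I would reduce to the previous case by the pullback trick already indicated in the text: given $R \subseteq M/Q \times M^{n_1} \times (M/Q)^{n_2}$ with each fiber $R(a)$ finite, define $\hat R \subseteq M \times M^{n_1} \times (M/Q)^{n_2}$ by $\hat R(a) := R(\pi(a))$, observe that its fibers remain finite, apply Proposition \ref{induction step} to code $\hat R$ uniformly, and note that $R$ is $\emptyset$-definably recovered from $\hat R$ since $\pi$ is surjective.

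Once (1)--(3) are in place, Lemma \ref{criterion} yields that every definable subset of $M_s$ (for any tuple of sorts $s$) is uniformly weakly coded in $\Cal M$; in particular this applies to the unary definable subsets of the home sort stated in the corollary, and, via the equivalence mentioned above, it delivers uniform weak elimination of imaginaries for $T_{POVS}$. The only step with any real content is verifying (3) in the quotient-sort case, but the pullback through $\pi$ is clean because $M/Q$ is definably a quotient of $M$ with no extra structure to worry about beyond what was already handled in Proposition \ref{induction step}; so I expect no serious obstacle, and the proof of the corollary should be essentially a few lines assembling the pieces.
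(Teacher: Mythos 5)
Your proposal is correct and follows essentially the same route as the paper: the corollary is obtained by verifying hypotheses (1)--(3) of Lemma \ref{criterion} exactly as you describe, using the constants $0,1$, Proposition \ref{weakcodes} together with the Hodges gluing argument for unary sets in the quotient sort, Proposition \ref{induction step} for the home-sort case of (3), and the pullback $\hat R(a)=R(\pi(a))$ for the quotient-sort case, then invoking the equivalence between uniform weak coding and uniform weak elimination of imaginaries.
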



\begin{thebibliography}{10}
\expandafter\ifx\csname urlstyle\endcsname\relax
  \providecommand{\doi}[1]{doi:\discretionary{}{}{}#1}\else
  \providecommand{\doi}{doi:\discretionary{}{}{}\begingroup
  \urlstyle{rm}\Url}\fi

\bibitem{RegLemma}
Artem Chernikov and Sergei Starchenko.
\newblock Regularity lemma for distal structures, 2015.

\bibitem{OpenCore}
Alfred Dolich, Chris Miller, and Charles Steinhorn.
\newblock Structures having o-minimal open core.
\newblock \emph{Trans. Amer. Math. Soc.}, 362(3):1371--1411, 2010.
\newblock ISSN 0002-9947.
\newblock \doi{10.1090/S0002-9947-09-04908-3}.

\bibitem{IndependentPairs}
Alfred Dolich, Chris Miller, and Charles Steinhorn.
\newblock Expansions of o-minimal structures by dense independent sets.
\newblock \emph{Ann. Pure Appl. Logic}, 167(8):684--706, 2016.
\newblock ISSN 0168-0072.
\newblock \doi{10.1016/j.apal.2016.04.003}.

\bibitem{DensePairs}
Lou van~den {D}ries.
\newblock Dense pairs of o-minimal structures.
\newblock \emph{Fund. Math.}, 157(1):61--78, 1998.
\newblock ISSN 0016-2736.

\bibitem{MannProperty}
Lou van~den Dries and Ayhan G\"unayd{\i}n.
\newblock The fields of real and complex numbers with a small multiplicative
  group.
\newblock \emph{Proc. London Math. Soc. (3)}, 93(1):43--81, 2006.
\newblock ISSN 0024-6115.
\newblock \doi{10.1017/S0024611506015747}.

\bibitem{GenericEI}
Sergio Fratarcangeli.
\newblock Elimination of imaginaries in expansions of o-minimal structures by
  generic sets.
\newblock \emph{J. Symbolic Logic}, 70(4):1150--1160, 2005.
\newblock ISSN 0022-4812.
\newblock \doi{10.2178/jsl/1129642120}.

\bibitem{DNDP}
Philipp Hieronymi and Travis Nell.
\newblock Distal and non-distal pairs.
\newblock \emph{J. Symb. Log.}, 82(1):375--383, 2017.
\newblock ISSN 0022-4812.
\newblock \doi{10.1017/jsl.2016.28}.

\bibitem{WildCore}
Philipp Hieronymi, Travis Nell, and Erik Walsberg.
\newblock Wild theories with o-minimal open core, 2017.

\bibitem{Hodges}
Wilfrid Hodges.
\newblock \emph{Model theory}, volume~42 of \emph{Encyclopedia of Mathematics
  and its Applications}.
\newblock Cambridge University Press, Cambridge, 1993.
\newblock ISBN 0-521-30442-3.
\newblock \doi{10.1017/CBO9780511551574}.

\bibitem{PairStructure}
Ayhan~G\"unayd{\i}n Pantelis E.~Eleftheriou and Philipp Hieronymi.
\newblock Structure theorems in tame expansions of o-minimal structures by
  dense sets, 2015.

\bibitem{Distal}
Pierre Simon.
\newblock Distal and non-distal {NIP} theories.
\newblock \emph{Ann. Pure Appl. Logic}, 164(3):294--318, 2013.
\newblock ISSN 0168-0072.
\newblock \doi{10.1016/j.apal.2012.10.015}.

\bibitem{SimonBook}
Pierre Simon.
\newblock \emph{A guide to {NIP} theories}, volume~44 of \emph{Lecture Notes in
  Logic}.
\newblock Association for Symbolic Logic, Chicago, IL; Cambridge Scientific
  Publishers, Cambridge, 2015.
\newblock ISBN 978-1-107-05775-3.
\newblock \doi{10.1017/CBO9781107415133}.

\end{thebibliography}
\end{document}